\documentclass[10pt]{amsart}
\usepackage{amsmath, amsthm, amssymb, amsfonts, verbatim}
\usepackage{array}
\usepackage[bookmarks]{hyperref}
\usepackage[active]{srcltx}
\usepackage{amscd}
\usepackage{epsfig}
\usepackage{color}
\usepackage{comment}
\usepackage{setspace}
\usepackage{textcomp}
\usepackage{stmaryrd}
\usepackage{undertilde}

\numberwithin{equation}{section}

\parindent=0.5cm

\newcommand{\ra}{\rightarrow}
\newcommand{\lb}{\llbracket}
\newcommand{\rb}{\rrbracket}

\newcommand{\half}{\frac{1}{2}}

\newcommand{\e}{\epsilon}

\newcommand{\R}{{\mathbb R}}

\newcommand{\X}{{\mathbb X}}

\newcommand{\calT}{\mathcal{T}}
\newcommand{\calP}{\mathcal{P}}

\renewcommand{\div}{\operatorname{div}}

\newcommand{\sym}{\operatorname{sym}}

\newcommand{\tr}{\operatorname{tr}}

\newcommand{\esssup}{\operatorname{esssup}}

\newcommand{\tv}{\theta_{\bs{z}}}
\newcommand{\tp}{\theta_p}

\newcommand{\bs}{\boldsymbol}

\newcommand{\lap}{\Delta}
\newcommand{\pd}{\partial}

\newtheorem{thm}{Theorem}[section]
 
\newtheorem{lemma}[thm]{Lemma}

\newtheorem{defn}[thm]{Definition}
\newtheorem{rmk}[thm]{Remark}

\begin{document}

\title[Finite element method of Biot's consolidation model]{Guaranteed locking-free finite element methods for Biot's consolidation model in poroelasticity}
\author{Jeonghun J. Lee}
\maketitle

\begin{abstract}
We propose a new finite element method for the three-field formulation of Biot's consolidation model in poroelasticity and prove a priori error estimates. Uniform-in-time error estimates of all the unknowns are obtained for both semidiscrete solutions and fully discrete solutions with the backward Euler time discretization. 
The novelty of our method is that the error analysis does not require the assumption that the constrained specific storage coefficient is uniformly positive. Therefore the method is guaranteed to be locking-free without the additional assumption on material parameters. 
\end{abstract}

\section{Introduction}
The Biot's consolidation model simultaneously describes the deformation of a saturated elastic porous medium and the viscous fluid flow inside \cite{MR0066874}. Since this model naturally arises from classical engineering applications such as geomechanics and petrolium engineering, the study of its numerical solutions has been of great interest. 

There are extensive literature on numerical schemes for Biot's consolidation model with finite element methods. In early studies of the problem with continuous Galerkin finite elements, nonphysical pressure oscillations of numerical solutions, called poroelasticity locking, were observed for certain ranges of material parameters and small time step sizes \cite{NAG:NAG1610080304,MR603175,NAG:NAG1610080106}.

In order to avoid the poroelasticity locking, various numerical methods for the problem with different formulations were considered. In a series of papers \cite{MR1156589,MR1257948,MR1393902}, Murad and his collaborators studied a two-field formulation of Biot's model in incompressible porous media, with displacement and pressure as unknowns, using mixed finite elements for Stokes equations. A discontinuous Galerkin method for the two-field formulation was studied recently \cite{MR3047799}. A Galerkin least square method \cite{MR2177147} was proposed for a four-field formulation of the problem, which has displacement, stress, fluid flux, and pressure as unknowns. A three-field formulation was studied with various couplings of continuous and discontinuous Galerkin methods, and mixed finite element methods in \cite{MR2327964,MR2327966,MR2461315}.  A coupling of nonconforming and mixed finite element methods for the formulation was recently studied in \cite{NUM:NUM21775}. For more information on previous studies we refer to \cite{MR3047799,lewis1998finite,NUM:NUM21775} and the references therein.

In the present paper, we are interested in the three-field formulation of the problem. Compared to the two-field formulation approach, the one more unknown seems to be a disadvantage but there is a reason that the three-field formulation approach may be preferred. An exact solution of Biot's model typically has a large jump of pressure, so when continuous finite elements are used for the numerical pressure, there is an overshoot for this large jump. Unfortunately, both the Taylor--Hood and MINI elements, the most popular mixed finite element families for Stokes problems, use continuous finite elements for pressure, so the overshooting is inevitable. However,  in the methods for three-field formulation, discontinuous finite elements are eligible for the pressure, so there is no overshooting. 

To ensure that a method is locking-free, a uniform-in-time error bound of the pressure is needed. In addition, the heuristic analysis and numerical experiments in \cite{phillips-wheeler} insinuate that the locking typically occurs when the constrained specific storage coefficient $c_0 \geq 0$ of the model is very close to 0. Thus the uniform-in-time error bound should be robust for very small or even vanishing $c_0$. However, the assumption that $c_0$ is uniformly positive is essential to obtain a uniform-in-time error bound of pressure in the error analysis of aforementioned methods for the three-field formulation \cite{MR2327964,MR2327966,NUM:NUM21775}. Although numerical tests of those methods suggest that they are good candidates to avoid the poroelasticity locking, their error analysis is not satisfactory from the mathematical point of view.

The goal of this paper is to develop a new locking-free finite element method for the three-field formulation of Biot's model. To confirm that the method is locking-free we show a priori error estimates of uniform-in-time errors of all the unknowns without the assumption that $c_0$ is uniformly positive. To the best of our knowledge, this result has not been achieved previously.

The paper is organized as follows. In section \ref{sec:prelim} we introduce notations and preliminaries of the problem. In section \ref{sec:semidiscrete-analysis} we present our numerical method for spatial discretization of the problem and show a priori error analysis of semidiscrete solutions. In section \ref{sec:fullydiscrete-analysis} we show error analysis of fully discrete solutions with the backward Euler time discretization. Finally, we conclude with remarks on extending the method to higher order methods and to rectangular meshes. 

\section{Preliminaries} \label{sec:prelim}

\subsection{Notations}
Let $\Omega$ be a bounded Lipschitz domain in $\R^n$ with $n=2$ or $3$. 
For a nonnegative integer $m$, $H^m (\Omega)$, $H^m(\Omega; \R^n)$ denote the standard $\R$ and $\R^n$-valued Sobolev spaces based on the $L^2$ norm ($H^0 = L^2$). For a set $G \subset \Omega$, $\| \cdot \|_{m,G}$ is the $H^m$ norm on $G$. If $G = \Omega$, then we simply use $\| \cdot \|_m$. The set of functions in $L^2(\Omega; \R^n)$ whose divergence are in $L^2(\Omega)$, and the corresponding norm are denoted by $H(\div, \Omega)$ and $\| \cdot \|_{\div}$. 

For a reflexive Banach space $\mathcal{X}$ and $0<T_0<\infty$, $C^0 ([0,T_0]; \mathcal{X})$ denotes the set of functions $f : [0,T_0] \rightarrow \mathcal{X}$ which are continuous in $t \in [0,T_0]$. For an integer $m \geq 1$ we define 
\begin{align*}
C^m ([0,T_0]; \mathcal{X}) = \{ f \, | \, \pd^{i}f/\pd t^{i} \in C^0([0,T_0];\mathcal{X}), \, 0 \leq i \leq m \},
\end{align*}
where $\pd^i f/\pd t^i$ is the $i$-th time derivative in the sense of the Fr\'echet derivative in $\mathcal{X}$ (see e.g., \cite{Yosida-book}). For a function $f : [a,b] \ra \mathcal{X}$, we define the space-time norm 
\begin{align*}
\| f \|_{L^r([a,b]; \mathcal{X})} = 
\begin{cases}
\left( \int_a^b \| f \|_\mathcal{X}^r ds \right)^{1/r}, \quad 1 \leq r < \infty, \\
\esssup_{t \in [a,b]} \| f \|_\mathcal{X}, \quad r = \infty.
\end{cases}
\end{align*}
If the time interval is fixed as $[0,T_0]$, then we use $L^r \mathcal{X}$ to denote $L^r([0,T_0]; \mathcal{X})$ for simplicity. We define the space-time Sobolev spaces $W^{k,r}([0,T_0]; \mathcal{X})$ for nonnegative integer $k$ and $1 \leq r \leq \infty$ as the closure of $C^k ([0,T_0]; \mathcal{X})$ with the norm $\| f \|_{W^{k,r} \mathcal{X}} = \sum_{i=0}^k \| \pd^i f / \pd t^i \|_{L^r \mathcal{X}}$. The Sobolev embedding \cite{MR697382} gives
\begin{align} \label{eq:sobolev}
W^{k+1,1} \mathcal{X} \hookrightarrow W^{k,\infty} \mathcal{X}. 
\end{align}
We adopt a convention that $\| f, g \|_\mathcal{X} = \| f \|_\mathcal{X} + \| g \|_\mathcal{X}$ for the norm of a Banach space $\mathcal{X}$. For two Banach spaces $\mathcal{X}$ and $\mathcal{Y}$, and for $f \in \mathcal{X} \cap \mathcal{Y}$, $\| f \|_{\mathcal{X} \cap \mathcal{Y}}$ will stand for $\| f \|_{\mathcal{X}} + \| f \|_{\mathcal{Y}}$. For simplicity of notations, $\dot{f}$, $\ddot{f}$, $\dddot{f}$ will be used to denote time derivatives $\pd f/ \pd t$, $\pd^2 f / \pd t^2$, $\pd^3 f / \pd t^3$. 


A shape-regular triangulation of $\Omega$ will be denoted by $\calT_h$ for which $h$ is the maximum diameter of triangles (or tetrahedra) and $\mathcal{E}_h$ is the corresponding set of edges (faces), respectively. The interior edges/faces $\mathcal{E}_h^{\circ}$ is the set $\{ E \in \mathcal{E}_h \,|\, E \subset \Omega \}$. For $E \in \mathcal{E}_h$ and functions $\bs{f}, \bs{g} : \mathcal{E}_h \ra \R^n$ we define
\begin{align*}
\langle \bs{f}, \bs{g} \rangle_E = \int_E \bs{f} \cdot \bs{g} \,ds, \qquad \langle \bs{f}, \bs{g} \rangle = \sum_{E \in \mathcal{E}_h} \langle \bs{f}, \bs{g} \rangle_E. 
\end{align*}
For $E \in \mathcal{E}_h$ and an element-wise $H^1$ function $\bs{v}$, $\lb \bs{v} \rb$ is defined by 
\begin{align*}
\lb \bs{v} \rb|_E = 
\begin{cases}
\text{the jump of }\bs{v} \text{ on } E, \quad &\text{ if } E \in \mathcal{E}_h^{\circ}, \\
\bs{v}, \quad &\text{ if }E \subset \pd \Omega.
\end{cases}
\end{align*}
For an integer $k \geq 0$, and $G \subset \R^n$, $\mathcal{P}_k(G)$ is the space of polynomials defined on $G$ of degree $\leq k$. We use $\calP_k (\calT_h)$ to denote the space of piecewise polynomials on $\mathcal{T}_h$ of degree $\leq k$. For a vector space $\X$, we use $\mathcal{P}_k(G; \X)$ and $\mathcal{P}_k(\mathcal{T}_h; \X)$ to denote the space of $\X$-valued polynomials with same conditions. 

Throughout this paper we use $X \lesssim Y$ to denote the inequality $X \leq cY$ with a generic constant $c>0$ which is independent of the mesh size, and $X \sim Y$ will stand for $X \lesssim Y$ and $Y \lesssim X$. If needed, we will use $c$ to denote generic positive constants in inequalities and it can be a different constant in every line.

\subsection{The Biot's consolidation model}
In this subsection we review the Biot's consolidation model in poroelasticity. In an elastic porous medium saturated with a fluid, fluid flow and deformation of the porous medium are intimately related and their simultaneous behaviors are described by Biot's model.

Throughout this paper we restrict our interest to quasistatic consolidation problems. In other words, we assume that the consolidation process is slow and the acceleration term is ignored. In our description of the model, $\bs{u}$ is the displacement of the porous medium, $p$ is the fluid pressure, $\bs{f}$ is the body force, and $g$ is the source/sink density function of the fluid. The governing equations of the model are 
\begin{align}
\label{eq:strong-eq1}-\div \mathcal{C} \e(\bs{u}) + \alpha \nabla p &= \bs{f}, \\
\label{eq:strong-eq2} c_0 \dot{p} + \alpha \div \dot{\bs{u}} - \div (\utilde{\bs{\kappa}} \nabla p) &= g, 
\end{align}
where $\mathcal{C}$ is the elastic stiffness tensor, $c_0 \geq 0 $ is the constrained specific storage coefficient, $\utilde{\bs{\kappa}}$ is the hydraulic conductivity tensor, and $\alpha>0$ is the Biot--Willis constant which is close to 1. In \eqref{eq:strong-eq1}, the $\div$ is the row-wise divergence of the $\R^{n\times n}$-valued function $\mathcal{C}\e(\bs{u})$. 

For isotropic elastic porous media, the elasticity tensor $\mathcal{C}$ has the form
\begin{align*} 
\mathcal{C} \utilde{\bs{\tau}} = 2 \mu \utilde{\bs{\tau}} + \lambda \tr(\utilde{\bs{\tau}}) \utilde{\bs{I}}, \qquad \utilde{\bs{\tau}} \in L^2(\Omega; \R_{\sym}^{n \times n}),
\end{align*}
where the constants $\mu, \lambda >0$ are Lam\'{e} coefficients, $\utilde{\bs{I}}$ is the identity matrix, and $\R_{\sym}^{n \times n}$ is the space of symmetric $n \times n$ matrices. We assume that $\mu$, $\lambda$ are bounded from above and below. 
The coefficient $c_0 \geq 0 $ is determined by the permeability of the porous medium, and the bulk moduli of the solid and the fluid. The hydraulic conductivity tensor $\utilde{\bs{\kappa}}$ is defined by the permeability tensor of the solid divided by the fluid viscosity and it is positive definite. We assume that $\utilde{\bs{\kappa}}$ is uniformly bounded from above and below. For derivation of these equations from physical modeling, we refer to standard porous media references, for instance, \cite{anandarajah2010computational}.

In order to be a well-posed problem, the equations (\ref{eq:strong-eq1}--\ref{eq:strong-eq2}) need appropriate boundary and initial conditions. We assume that there are two partitions of $\pd \Omega$,
\begin{align*}
\pd \Omega = \Gamma_p \cup \Gamma_f, \qquad \pd \Omega = \Gamma_d \cup \Gamma_t,
\end{align*}
with $| \Gamma_p |,| \Gamma_d | > 0$, i.e., the $n-1$-dimensional measure of $\Gamma_p$ and $\Gamma_d$ are positive. Boundary conditions are given by
\begin{align}
\label{eq:bc}  
\begin{split}
&p(t) = 0 \quad \text{ on } \Gamma_p, \quad - \utilde{\bs{\kappa}} \nabla p(t) \cdot \bs{n} = 0 \quad\text{ on } \Gamma_f, \\
& \bs{u}(t) = 0 \quad\text{ on } \Gamma_d, \quad \utilde{\bs{\sigma}}(t) \bs{n} = 0 \quad\text{ on } \Gamma_t,
\end{split}
\end{align}
for all $t \in [0, T_0]$, in which $\bs{n}$ is the outward unit normal vector field on $\pd \Omega$ and $\utilde{\bs{\sigma}}(t) := \mathcal{C} \e(\bs{u}(t)) - \alpha p(t) \utilde{\bs{I}}$.
Here we only consider this homogeneous boundary condition for simplicity but our method, which will be introduced later, can be extended readily to problems with inhomogeneous boundary conditions. We also assume that given initial data $p(0), \bs{u}(0)$ and initial body force $\bs{f}(0)$ satisfy \eqref{eq:strong-eq1}.

Regularity of the solutions of the problem (\ref{eq:strong-eq1}--\ref{eq:strong-eq2}) with suitable boundary conditions were thoroughly studied in \cite{MR1790411}. 
When we claim a priori error estimates we assume that exact solutions are sufficiently regular to obtain the claimed error bounds. For simplicity, we assume that $\alpha=1$ and $\utilde{\bs{\kappa}} = \utilde{\bs{I}}$ in the rest of the paper. However, we assume that $c_0 \geq 0$ is only bounded from above.


\subsection{Variational formulation}
Assuming $\alpha = 1$, $\utilde{\bs{\kappa}} = \utilde{\bs{I}}$, and introducing a new unknown $\bs{z} := \utilde{\bs{\kappa}} \nabla p$ in (\ref{eq:strong-eq1}--\ref{eq:strong-eq2}), we have 
\begin{align}
\label{eq:new-strong-eq1} -\div \mathcal{C} \e(\bs{u}) + \nabla p &= \bs{f}, \\
\label{eq:new-strong-eq2} \bs{z} - \nabla p &= 0, \\
\label{eq:new-strong-eq3} c_0 \dot{p} + \div \dot{\bs{u}} - \div \bs{z} &= g.
\end{align}
Let 
\begin{align*}
\Sigma_{\Gamma_d} &= \{ \bs{u} \in H^1(\Omega; \R^n) \,|\, \bs{u}|_{\Gamma_d} = 0 \}, \\
V_{\Gamma_f} &= \{ \bs{z} \in H(\div, \Omega) \,|\, \bs{z}\cdot \bs{n}|_{\Gamma_f} = 0 \}, \\
W &= L^2(\Omega).
\end{align*}
and define a bilinear form 
\begin{align*}
a(\bs{u}, \bs{v}) = (\mathcal{C} \e(\bs{u}), \e(\bs{v})) = 2\mu (\e({\bs{u}}), \e({\bs{v}})) + \lambda (\div \bs{u}, \div \bs{v}), \quad \bs{u}, \bs{v} \in H^1(\Omega; \R^n).
\end{align*}
Then a variational formulation of (\ref{eq:new-strong-eq1}--\ref{eq:new-strong-eq3}) with boundary conditions \eqref{eq:bc} is to seek $(\bs{u}, p) \in C^1([0,T_0]; \Sigma_{\Gamma_d} \times W)$ and $\bs{z} \in C^0([0,T_0];V_{\Gamma_f})$ such that 
\begin{align}
\label{eq:weak-eq1} a(\bs{u}, \bs{v}) - (p, \div \bs{v}) &= (\bs{f}, \bs{v}), & & \bs{v} \in \Sigma_{\Gamma_d}, \\
\label{eq:weak-eq2} ( \bs{z}, \bs{w}) + (p, \div \bs{w}) &= 0, & & \bs{w} \in V_{\Gamma_f}, \\
\label{eq:weak-eq3} (c_0 \dot{p}, q) +(\div \dot{\bs{u}}, q) - (\div \bs{z}, q) &= (g, q), & & q \in W.
\end{align}

\subsection{Finite element spaces}
For discretization of (\ref{eq:weak-eq1}--\ref{eq:weak-eq3}) we need three finite element spaces $\Sigma_h$, $V_h$, $W_h$ for unknowns $\bs{u}$, $\bs{z}$, $p$, respectively.

We define $V_h$ and $W_h$ as the lowest order Raviart--Thomas--Ned\'{e}l\'{e}c space and piecewise constant finite element space
\begin{align*}
V_h &= \{ \bs{w} \in V_{\Gamma_f} \,|\, \bs{w}|_T \in (\mathcal{P}_0(T)^n + \bs{x} \,\mathcal{P}_0(T)), \;\forall T \in \mathcal{T}_h \}, \\
W_h &= \{ q \in L^2(\Omega) \,|\, q|_T \in \mathcal{P}_0(T), \;\forall T \in \mathcal{T}_h \}.
\end{align*}
Here $\bs{x}$ is the vector field $(x_1\;x_2)^T$ and $(x_1\;x_2\;x_3)^T$ in two and three dimensions, respectively. 
Let $\Pi_h^{RT}$ be the canonical Raviart--Thomas interpolation operator into $V_h$, and $Q_h$ be the orthogonal $L^2$ projection into $W_h$. It is well-known that, for $\bs{w} \in H^1(\Omega; \R^n)$ and $q \in H^1(\Omega)$,
\begin{gather}
\label{eq:commute1} \div V_h = W_h, \qquad \div \Pi_h^{RT} \bs{w} = Q_h \div \bs{w},   \\
\label{eq:approx1} \| \bs{w} - \Pi_h^{RT} \bs{w} \|_0 \lesssim h \| \bs{w} \|_{1}, \qquad \| q - Q_h q \|_0 \lesssim h \| q \|_{1},
\end{gather}
hold. Furthermore, there exists a $\beta >0$, independent of the mesh size, such that 
\begin{align}
\label{eq:inf-sup1} \inf_{0 \not = q \in W_h} \sup_{0 \not = \bs{w} \in V_h} \frac{(q, \div \bs{w})}{\| q \|_0 \| \bs{w} \|_{\div}} \geq \beta > 0.
\end{align}

For $\Sigma_h$ we use vector-valued nonconforming $H^1$ elements. In two dimensions we use 
the Mardal--Tai--Winther element \cite{MR1950614} which has local shape functions on $T \in \mathcal{T}_h$ as 
\begin{align*}
\Sigma_T = \{ \bs{v} \in (\mathcal{P}_3(T))^2 \,|\, \div \bs{v} \in \mathcal{P}_0(T), \;\bs{v} \cdot \bs{n}|_E \in \mathcal{P}_1(E), \;E \subset \pd T \},
\end{align*}
and DOFs 
\begin{align*}
\bs{v} \mapsto \int_{E} \bs{v} \cdot \bs{n}_E r \,ds, \quad \forall r \in \mathcal{P}_1(E), \qquad 
\bs{v} \mapsto \int_E \bs{v} \cdot \bs{t}_E \,ds,
\end{align*}
in which $\bs{n}_E$ and $\bs{t}_E$ are the unit normal and tangential vectors on an edge $E$ of $T$. Let $\Sigma_h$ be the Mardal--Tai--Winther element with an additional condition that  
\begin{align} \label{eq:sigma-property5}
\text{all DOFs associated to the edges in } \Gamma_d \text{ vanish}.
\end{align}
By definition one can see that $\Sigma_h \subset H(\div, \Omega)$ and $\div \Sigma_h \subset W_h$. In three dimensions we use the element developed in \cite{MR2283095} which is a three dimensional analogue of the Mardal--Tai--Winther element.

The rest of this section will be devoted to present properties of $\Sigma_h$ which will be important for well-posedness and the a priori error analysis of our methods. Let us first define a discrete semi-norm for element-wise $H^1$ functions by
\begin{align*}
\| \bs{v} \|_{1,h}^2 = \sum_{T \in \mathcal{T}_h} \| \nabla \bs{v} \|_{0,T}^2. 
\end{align*}
By the Poincar\'{e} inequality this is in fact a norm on $\Sigma_{\Gamma_d}$. By \eqref{eq:sigma-property5}, the interelement continuity of $\Sigma_h$, and a discrete Poincar\'{e} inequality \cite{MR1974504}, $\| \cdot \|_{1,h}$ is a norm on $\Sigma_h$ as well. 
Let 
\begin{align*}
\Sigma_{\Gamma_d} + \Sigma_h = \{ \bs{v} \,|\, \bs{v} = \bs{v}_1 + \bs{v}_2 \quad \text{for } \bs{v}_1 \in \Sigma_{\Gamma_d}, \bs{v}_2 \in \Sigma_h \}.
\end{align*}
Then we are able to prove the following discrete Korn's inequality.
\begin{lemma} \label{lemma:korn} For $\bs{v} \in \Sigma_{\Gamma_d} + \Sigma_h$ and the element-wise symmetric gradient $\e_h$,
\begin{align*}
\| \bs{v} \|_{1,h} \sim \| \e_h(\bs{v}) \|_0 .
\end{align*}
\end{lemma}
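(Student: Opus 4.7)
The easy direction, $\|\e_h(\bs{v})\|_0 \leq \|\bs{v}\|_{1,h}$, is immediate from the pointwise inequality $|\e(\bs{v})|\leq|\nabla\bs{v}|$ applied on each $T \in \mathcal{T}_h$. So the work is in proving the reverse bound.

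The plan is to invoke a Brenner-type discrete Korn inequality for piecewise $H^1$ vector fields (in the spirit of Brenner's discrete Korn inequality, cited in \cite{MR1974504}), which for any element-wise $H^1$ function $\bs{v}$ on a shape-regular mesh yields
\begin{align*}
\|\bs{v}\|_{1,h}^2 \lesssim \|\e_h(\bs{v})\|_0^2
+ \sum_{E \in \mathcal{E}_h^{\circ}} h_E^{-1}\left|\int_E \lb\bs{v}\rb\,ds\right|^2
+ \sum_{E \subset \Gamma_d} h_E^{-1}\left|\int_E \bs{v}\,ds\right|^2.
\end{align*}
Once this is in hand, I only need to verify that the two jump/boundary sums vanish for every $\bs{v} \in \Sigma_{\Gamma_d}+\Sigma_h$. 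Decomposing $\bs{v}=\bs{v}_1+\bs{v}_2$ with $\bs{v}_1 \in \Sigma_{\Gamma_d}$ and $\bs{v}_2 \in \Sigma_h$, the contribution from $\bs{v}_1$ is immediately zero: $\lb\bs{v}_1\rb = 0$ across every interior edge because $\bs{v}_1 \in H^1$, and $\bs{v}_1|_{\Gamma_d}=0$ by definition of $\Sigma_{\Gamma_d}$.

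For $\bs{v}_2 \in \Sigma_h$ on an interior edge $E$, I will use the two features of the Mardal--Tai--Winther DOFs. First, since the normal trace $\bs{v}_2\cdot\bs{n}_E$ lies in $\mathcal{P}_1(E)$ and its moments against all of $\mathcal{P}_1(E)$ are continuous DOFs, the normal jump $\lb\bs{v}_2\rb\cdot\bs{n}_E$ vanishes pointwise on $E$. Second, the tangential moment $\int_E \bs{v}_2\cdot\bs{t}_E\,ds$ is a single-valued DOF, so $\int_E \lb\bs{v}_2\rb\cdot\bs{t}_E\,ds=0$. Together these give $\int_E \lb\bs{v}_2\rb\,ds=0$. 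On boundary edges $E \subset \Gamma_d$, the additional condition \eqref{eq:sigma-property5} makes the corresponding DOFs vanish, so by the same argument $\int_E \bs{v}_2\,ds=0$. Thus both sums in the Brenner inequality collapse and the desired estimate follows.

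The principal obstacle is simply to isolate the correct form of the piecewise Korn inequality and match its ``consistency penalty'' to the precise continuity that the MTW DOFs enforce on $\Sigma_h$ (pointwise in the normal direction, but only in the mean sense in the tangential direction). Once the penalty is written in the form $h_E^{-1}|\int_E\lb\bs{v}\rb|^2$, the weak continuity of the tangential component is exactly strong enough to kill the term, which is the whole reason the MTW element is chosen here.
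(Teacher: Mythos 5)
There is a genuine gap, and it sits exactly where you placed the whole weight of the argument: the ``Brenner-type'' inequality you invoke, with jump penalties of the form $h_E^{-1}\bigl|\int_E \lb\bs{v}\rb\,ds\bigr|^2$ (and $h_E^{-1}\bigl|\int_E\bs{v}\,ds\bigr|^2$ on $\Gamma_d$), is false for general element-wise $H^1$ fields. Penalizing only edge means does not control the kernel of $\e_h$: a piecewise rigid motion that jumps across an edge $E$ by a rotation about the midpoint of $E$ has $\e_h(\bs{v})=0$ and zero mean jump on $E$, yet a nonzero gradient. Equivalently, if your inequality were true it would immediately give the discrete Korn inequality $\|\bs{v}\|_{1,h}\lesssim\|\e_h(\bs{v})\|_0$ for the vector Crouzeix--Raviart space (whose only interelement/boundary continuity is exactly vanishing edge means of jumps), and that inequality is well known to fail -- indeed its failure is the reason the Mardal--Tai--Winther element exists and is the subject of \cite{MR2176387}. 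So the step ``once the penalty is written in the form $h_E^{-1}|\int_E\lb\bs{v}\rb|^2$, the weak tangential continuity kills it'' cannot be rescued: no correct piecewise Korn inequality has a penalty that weak.

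The correct starting point (used in the paper) is Brenner's inequality (1.12) in \cite{Brenner04}, whose interior penalty is $h_E^{-1}\|\Pi_E\lb\bs{v}\rb\|_{0,E}^2$ with $\Pi_E$ the $L^2$ projection onto $\mathcal{P}_1(E;\R^2)$, and whose boundary term is a functional testing $\bs{v}$ on $\Gamma_d$ against rigid body motions. With this penalty your verification is no longer enough: MTW continuity makes the \emph{normal} jump vanish identically (both normal traces lie in $\mathcal{P}_1(E)$ and share all $\mathcal{P}_1$ moments), but for the \emph{tangential} component only the mean of the jump vanishes, so $\Pi_E\lb\bs{v}\rb$ need not be zero. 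One needs the additional reduction step of \cite{MR2176387}: replace $\Pi_E\lb\bs{v}\rb$ by the projection of the jump onto the traces of rigid body motions on $E$ (normal component affine, tangential component constant), absorbing the discrepancy into $\|\e_h(\bs{v})\|_0^2$; only then does the MTW continuity (pointwise normal continuity plus tangential mean continuity) annihilate the jump term, and the boundary functional vanishes because the $\Gamma_d$-edge DOFs of $\Sigma_h$, together with $\bs{v}_1|_{\Gamma_d}=0$, kill integrals of $\bs{v}$ against rigid-motion traces. Your observation about the pointwise normal continuity is the right raw ingredient, but your chosen penalty never uses its full strength, and without the rigid-motion reduction the argument does not close.
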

\begin{proof}
By the definition of $\| \cdot \|_{1,h}$, $\| \e_h(\bs{v}) \|_{0} \leq \| \bs{v} \|_{1,h}$ is obvious. 

Proof of the other direction is essentially same to the proof of Theorem 3.1 in \cite{MR2176387}, so we only sketch it. The inequality (1.12) in \cite{Brenner04} gives 
\begin{align*}
\| \bs{v} \|_{1,h}^2 &\lesssim \| \e_h(\bs{v}) \|_0^2 + \sum_{E \in \mathcal{E}_h, E \subset \Omega} h_E^{-1} \| \Pi_E \lb \bs{v} \rb \|_{0,E}^2 \\
&\quad + \sup_{\stackrel{\bs{m} \in RM(\Omega)} {\| \bs{m} \|_{0,\Gamma_d} = 1, \int_{\Gamma_d} \bs{m} ds = 0}} \left( \int_{\Gamma_d} \bs{v} \cdot \bs{m} ds \right)^2,
\end{align*}
where $\Pi_E$ is the $L^2$ projection to $\mathcal{P}_1(E; \R^2)$ and $RM(\Omega)$ is the space of rigid body motions on $\Omega$. Actually, the last term in the above vanishes because $\bs{v} \in  \Sigma_{\Gamma_d} + \Sigma_h$. In the second term $\Pi_E \lb \bs{v} \rb$ can be reduced to the jumps projected to the space of traces of rigid body motions on $E$ under the help of $\| \e_h(\bs{v}) \|_0^2$ as in \cite{MR2176387}. However, there is no jumps in traces of rigid body motions due to the interelement continuity of $\Sigma_h$, so the desired inequality follows. 
\end{proof}
It is proven in \cite{MR1950614} that there exists an interpolation $\Pi_h : H^2(\Omega; \R^n) \ra \Sigma_h$, satisfying 
\begin{align}
\label{eq:sigma-property1} &  \div \Pi_h \bs{v} = Q_h \div \bs{v}, \\
\label{eq:sigma-property2}& \| \bs{v} - \Pi_h \bs{v} \|_0 \lesssim h \| \bs{v} \|_1, \quad \| \Pi_h \bs{v} \|_{1,h} \lesssim \| \bs{v} \|_1, \quad \| \Pi_h \bs{v} - \bs{v} \|_{1,h} \lesssim h\| \bs{v} \|_2. 
\end{align}
Finally, there exists a $\beta'>0$ such that
\begin{align}
\label{eq:sigma-property3} \inf_{0 \not = q \in W_h} \sup_{0 \not = \bs{v} \in \Sigma_h} \frac{(q, \div \bs{v})}{\| q \|_0 \| \bs{v} \|_{1,h}} \geq \beta' > 0. 
\end{align}


%
%
\section{Error analysis of semidiscrete solutions} \label{sec:semidiscrete-analysis}
In this section we discuss well-posedness of the semidiscrete problems of (\ref{eq:new-strong-eq1}--\ref{eq:new-strong-eq3}) and the a priori error analysis of the semidiscrete solutions. In the rest of this paper $\Sigma_h \times V_h \times W_h$ will always be the finite element spaces introduced in the previous section. 
\subsection{Semidiscrete problem and its well-posedness}
The semidiscrete problem of (\ref{eq:weak-eq1}--\ref{eq:weak-eq3}) is to seek $(\bs{u}_h, \bs{z}_h, p_h) : [0,T_0] \ra \Sigma_h \times V_h \times W_h$ such that  
\begin{align}
\label{eq:weak-eq1-disc} a_h(\bs{u}_h, \bs{v}) - (p_h, \div \bs{v}) &= (\bs{f}, \bs{v}), & & \bs{v} \in \Sigma_h, \\
\label{eq:weak-eq2-disc} ( \bs{z}_h, \bs{w}) + (p_h, \div \bs{w}) &= 0, & & \bs{w} \in V_h, \\
\label{eq:weak-eq3-disc} (c_0 \dot{p}_h, q) + (\div \dot{\bs{u}}_h, q) - (\div \bs{z}_h, q) &= (g, q), & & q \in W_h,
\end{align}
where 
\begin{align*} 
a_h(\bs{u}, \bs{v}) = 2 \mu (\e_h(\bs{u}), \e_h(\bs{v})) + \lambda (\div \bs{u}, \div \bs{v}), \qquad \bs{u}, \bs{v} \in \Sigma_h.
\end{align*}
For the existence and uniqueness of solutions of (\ref{eq:weak-eq1-disc}--\ref{eq:weak-eq3-disc}), we first point out that this is not a system of ordinary differential equations (ODE) but a differential algebraic equation (DAE). Thus the theory of ODEs for existence of solutions is not available. 

Unfortunately, there is a difficulty adopting a standard DAE theory to prove existence of solutions for the problem.
To see it we review the basic theory of linear DAE problems. Let $\mathbb{C}$ be the complex field and $m$ be a positive integer. For $E_0, E_1 \in \mathbb{C}^{m\times m}$, $F \in C^0([0,\infty); \mathbb{C}^m)$, $X_0 \in \mathbb{C}^m$, a linear DAE is to seek $X \in C^1([0,\infty), \mathbb{C}^m)$ such that 
\begin{align*}
E_0 \dot X + E_1 X = F, \qquad X(0) = X_0.
\end{align*}
In general $E_0$ can be singular. In standard DAE theory, this DAE is called {\it well-posed} if the matrix pencil $E_0 + \lambda E_1$ is regular for some $\lambda \in \mathbb{C}$. However, this definition is easy to mislead because it does {\it not} guarantee the existence of solutions for arbitrary initial data. It is known that a well-posed DAE has a unique solution if given initial data $X_0$ is compatible, i.e., the initial data satisfies some intrinsic algebraic equations of the DAE \cite{DAEbook}. To derive those intrinsic algebraic equations from a DAE, we need to know the Jordan canonical form of the augmented matrix $[E_0\; E_1]$, and it is certainly impractical if $m$ is large. Another difficulty arises from the variance of $c_0$ because finding the Jordan canonical form of the augmented matrix changes drastically when $c_0$ changes. Therefore we will prove the existence and uniqueness of solutions of (\ref{eq:weak-eq1-disc}--\ref{eq:weak-eq3-disc}) directly. 

\begin{thm}
For initial data $p_h(0) \in W_h$ and given $\bs{f} \in C^1([0,T_0]; L^2(\Omega; \R^n))$, $g \in C^0([0,T_0]; L^2(\Omega))$ there exists a unique solution $(\bs{u}_h, \bs{z}_h, p_h) \in C^1([0,T_0]; \Sigma_h \times V_h \times W_h)$ of {\rm (\ref{eq:weak-eq1-disc}--\ref{eq:weak-eq3-disc})}.
\end{thm}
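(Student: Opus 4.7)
The plan is to eliminate $\bs{u}_h$ and $\bs{z}_h$ from the algebraic constraints (\ref{eq:weak-eq1-disc}) and (\ref{eq:weak-eq2-disc}) at each time and reduce the DAE to a linear ODE for $p_h$ alone in the finite-dimensional space $W_h$. For fixed $t$ and each $p \in W_h$, the problem $a_h(\bs{u}_h, \bs{v}) = (\bs{f}(t), \bs{v}) + (p, \div \bs{v})$ for all $\bs{v} \in \Sigma_h$ has a unique solution $\bs{u}_h \in \Sigma_h$ because $a_h(\bs{v}, \bs{v}) \geq 2\mu \|\e_h(\bs{v})\|_0^2$ combined with the discrete Korn inequality of Lemma \ref{lemma:korn} gives coercivity of $a_h$ on $\Sigma_h$. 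Similarly, (\ref{eq:weak-eq2-disc}) uniquely determines $\bs{z}_h$ from $p$ via invertibility of the $L^2$ mass on $V_h$. Denote these linear maps by $\bs{u}_h = \mathcal{U}(p, \bs{f}(t))$ and $\bs{z}_h = \mathcal{Z}(p)$.

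Substituting $\bs{u}_h(t) = \mathcal{U}(p_h(t), \bs{f}(t))$ and $\bs{z}_h(t) = \mathcal{Z}(p_h(t))$ into (\ref{eq:weak-eq3-disc}) and using the linearity of $\mathcal{U}$ in both arguments yields
\begin{equation*}
b(\dot{p}_h, q) = (g, q) + (\div \mathcal{Z}(p_h), q) - (\div \mathcal{U}(0, \dot{\bs{f}}), q), \qquad q \in W_h,
\end{equation*}
where $b(p, q) := (c_0 p, q) + (\div \mathcal{U}(p, 0), q)$. The key step is to verify that $b$ is positive definite on $W_h$. Testing the defining relation of $\mathcal{U}(p,0)$ against $\bs{v} = \mathcal{U}(p,0)$ gives $(\div \mathcal{U}(p,0), p) = a_h(\mathcal{U}(p,0), \mathcal{U}(p,0)) \geq 0$, hence $b(p,p) = c_0 \|p\|_0^2 + a_h(\mathcal{U}(p,0), \mathcal{U}(p,0)) \geq 0$. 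If $b(p,p) = 0$ then coercivity of $a_h$ forces $\mathcal{U}(p,0) = 0$, which by the defining relation means $(p, \div \bs{v}) = 0$ for all $\bs{v} \in \Sigma_h$, and the inf-sup condition (\ref{eq:sigma-property3}) then gives $p = 0$. This is exactly where the inf-sup stability of the pair $(\Sigma_h, W_h)$ compensates for the possible vanishing of $c_0$.

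With $b$ positive definite on the finite-dimensional space $W_h$, the reduced problem is a well-posed linear ODE whose right-hand side is continuous in $t$ (using $\bs{f} \in C^1$ so that $\dot{\bs{f}} \in C^0$, together with $g \in C^0$). The Picard--Lindel\"of theorem supplies a unique $p_h \in C^1([0,T_0]; W_h)$ matching the prescribed initial value. I then recover $\bs{u}_h(t) := \mathcal{U}(p_h(t), \bs{f}(t))$ and $\bs{z}_h(t) := \mathcal{Z}(p_h(t))$; since $\mathcal{U}$ and $\mathcal{Z}$ are bounded linear operators between finite-dimensional spaces, the $C^1$ regularity of $p_h$ and of $\bs{f}$ propagates to give $(\bs{u}_h, \bs{z}_h) \in C^1([0,T_0]; \Sigma_h \times V_h)$, and by construction the triple satisfies (\ref{eq:weak-eq1-disc})--(\ref{eq:weak-eq3-disc}). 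Uniqueness of the triple follows from uniqueness for the ODE combined with the unique solvability used to define $\mathcal{U}$ and $\mathcal{Z}$.

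The main obstacle I anticipate is precisely the positive definiteness of $b$ in the regime where $c_0$ is allowed to vanish; absent the storage term, the reduced equation threatens to degenerate into an algebraic constraint, and it is the structural inf-sup property (\ref{eq:sigma-property3}) of the chosen finite element pair that rescues the argument. Everything else---coercivity of $a_h$ on $\Sigma_h$, invertibility of the $L^2$ mass on $V_h$, and linear ODE theory in a finite-dimensional space---is routine.
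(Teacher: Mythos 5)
Your proof is correct and follows essentially the same strategy as the paper's: eliminate $\bs{u}_h$ and $\bs{z}_h$ through the two algebraic constraints, reduce the DAE to a linear ODE for $p_h$, and show the coefficient of $\dot{p}_h$ is symmetric positive definite using coercivity of $a_h$ (via Lemma \ref{lemma:korn}) plus the inf-sup condition \eqref{eq:sigma-property3}, which is robust as $c_0 \to 0$. The only difference is presentational: you phrase the elimination in coordinate-free operator form ($\mathcal{U}$, $\mathcal{Z}$, the bilinear form $b$), whereas the paper carries out the identical computation explicitly in matrix form after choosing bases.
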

\begin{proof} We show that (\ref{eq:weak-eq1-disc}--\ref{eq:weak-eq3-disc}) is equivalent to an ODE system and check well-posedness of the ODE system. 

Let $\{ \phi_i \}$, $\{ \psi_i \}$, $\{ \chi_i \}$ be bases of $\Sigma_h$, $V_h$, and $W_h$, respectively. We use $\mathbb{A}_{\bs{u} \bs{u}}$, $\mathbb{A}_{\bs{z} \bs{z}}$, $\mathbb{A}_{pp}$, $\mathbb{B}_{\bs{u} p}$, $\mathbb{B}_{\bs{z} p}$ to denote the matrices whose $(i,j)$-entries are
\begin{align*}
a_h(\phi_j, \phi_i), \quad ( \psi_j, \psi_i), \quad (c_0 \chi_j, \chi_i), \quad (\chi_i, \div \phi_j), \quad (\chi_i, \div \psi_j),
\end{align*}
respectively. We write $\bs{u}_h = \sum_i \alpha_i \phi_i$, $\bs{z}_h = \sum_i \beta_i \psi_i$, $p_h = \sum_i \gamma_i \chi_i$, $P_h f = \sum_i \zeta_i \psi_i$, $Q_h g = \sum_i \xi_i \chi_i$ with (time-dependent) coefficient vectors $\bs{\alpha}$, $\bs{\beta}$, $\bs{\gamma}$, $\bs{\zeta}$, $\bs{\xi}$, where $P_h$ is the $L^2$ projection into $V_h$.
Then (\ref{eq:weak-eq1-disc}--\ref{eq:weak-eq3-disc}) can be written as a matrix equation
\begin{align} \label{eq:matrix-eq}
\begin{pmatrix}
0 & 0 & 0 \\
0 & 0 & 0 \\
{\mathbb B}_{\bs{u} p}^T & 0 & {\mathbb A}_{pp}
\end{pmatrix}
\begin{pmatrix}
\dot{\bs{\alpha}} \\
\dot{\bs{\beta}} \\
\dot{\bs{\gamma}}
\end{pmatrix}
+
\begin{pmatrix}
{\mathbb A}_{\bs{u} \bs{u}} & 0 & -{\mathbb B}_{\bs{u} p} \\
0 & {\mathbb A}_{\bs{z} \bs{z}} & {\mathbb B}_{\bs{z} p} \\
0 & -{\mathbb B}_{\bs{z} p}^T & 0
\end{pmatrix}  
\begin{pmatrix}
\bs{\alpha} \\
\bs{\beta} \\
\bs{\gamma}
\end{pmatrix}
= 
\begin{pmatrix}
\bs{\zeta}\\
0 \\
\bs{\xi}
\end{pmatrix}.
\end{align}
It is obvious that ${\mathbb A}_{\bs{z} \bs{z}}$ is symmetric positive definite and ${\mathbb A}_{pp}$ is symmetric positive semidefinite because $c_0 \geq 0$. By Lemma \ref{lemma:korn}, ${\mathbb A}_{\bs{u} \bs{u}}$ is also symmetric positive definite.   

The first and second rows of \eqref{eq:matrix-eq} give
\begin{align} \label{eq:uz-replace}
\bs{\alpha} = {\mathbb A}_{\bs{u} \bs{u}}^{-1} {\mathbb B}_{\bs{u} p} \bs{\gamma} + {\mathbb A}_{\bs{u} \bs{u}}^{-1} \bs{\zeta}, \qquad \bs{\beta} = -{\mathbb A}_{\bs{z} \bs{z}}^{-1} {\mathbb B}_{\bs{z} p} \bs{\gamma}.
\end{align}
The third row of \eqref{eq:matrix-eq} gives ${\mathbb B}_{\bs{u} p}^T \dot{\bs{\alpha}} + {\mathbb A}_{pp} \dot{\bs{\gamma}} - {\mathbb B}_{\bs{z} p}^T \bs{\beta} = \bs{\xi}$. Substituting $\bs{\alpha}$ and $\bs{\beta}$ in this equation using \eqref{eq:uz-replace}, one obtains
\begin{align*}
({\mathbb B}_{\bs{u} p}^T {\mathbb A}_{\bs{u} \bs{u}}^{-1} {\mathbb B}_{\bs{u} p} + {\mathbb A}_{pp}) \dot{\bs{\gamma}}  + {\mathbb B}_{\bs{z} p}^T {\mathbb A}_{\bs{z} \bs{z}}^{-1} {\mathbb B}_{\bs{z} p} \bs{\gamma} = - {\mathbb B}_{\bs{u} p}^T {\mathbb A}_{\bs{u} \bs{u}}^{-1} \dot{\bs{\zeta}} + \bs{\xi}, 
\end{align*}
which is an ODE system of $\bs{\gamma}$. The inf-sup condition \eqref{eq:sigma-property2} implies that the matrix ${\mathbb B}_{\bs{u} p}$ is injective. 
Since ${\mathbb B}_{\bs{u} p}^T {\mathbb A}_{\bs{u} \bs{u}}^{-1} {\mathbb B}_{\bs{u} p}$ is positive definite and ${\mathbb A}_{pp}$ is positive semidefinite, their sum is positive definite. 
Thus the above system has a unique solution $\bs{\gamma}$ for given initial data $\bs{\gamma}(0)$ by a standard ODE theory. Furthermore, ${\mathbb B}_{\bs{u} p}^T {\mathbb A}_{\bs{u} \bs{u}}^{-1} {\mathbb B}_{\bs{u} p} + {\mathbb A}_{pp}$ is still invertible as ${\mathbb A}_{pp} \ra 0$, so this well-posedness is not influenced by small or even vanishing $c_0$. Note that $\bs{\alpha}$, $\bs{\beta}$ are uniquely determined from $\gamma$ by \eqref{eq:uz-replace}. Now the assertion follows from equivalence of \eqref{eq:matrix-eq} and (\ref{eq:weak-eq1-disc}--\ref{eq:weak-eq3-disc}). 
\end{proof}
In this theorem only initial data $p_h (0)$ is given but $\bs{u}_h(0)$, $\bs{z}_h(0)$ are determined by \eqref{eq:weak-eq1-disc} and \eqref{eq:weak-eq2-disc}, which we will call compatibility conditions. 
\begin{defn}
A triple $(\bs{v}', \bs{w}', q') \in \Sigma_h \times V_h \times W_h$ is called a compatible initial data of \rm{(\ref{eq:weak-eq1-disc}--\ref{eq:weak-eq3-disc})} if 
\begin{align}
\label{eq:ic-comp1} a_h(\bs{v}', \bs{v}) - (q', \div \bs{v}) &= (\bs{f}(0), \bs{v}), & & \bs{v} \in \Sigma_h, \\
\label{eq:ic-comp2} ( \bs{w}', \bs{w}) + (q', \div \bs{w}) &= 0, & & \bs{w} \in V_h,
\end{align}
hold.
\end{defn}
If the backward Euler scheme is used for time discretization, then the compatibility of initial data may not be crucial because the compatibility conditions are parts of the equations, so the numerical solution after one time step satisfies the compatibility conditions. However, incompatibility of initial data in DAE problems may generate a spurious numerical solution with the Crank--Nicolson scheme even if it is a very stable time discretization scheme in general. 


\subsection{Error analysis}
We state and prove main results of semidiscrete error analysis. We denote the discrete $H^1$ space with the norm $\| \cdot \|_{1,h}$ by $H_h^1$. 
\begin{thm}\label{thm:semidiscrete-error} Suppose that $(\bs{u}, \bs{z}, p)$ is a solution of \rm{(\ref{eq:new-strong-eq1}--\ref{eq:new-strong-eq3})} which is sufficiently regular  
and $(\bs{u}_h, \bs{z}_h, p_h)$ is a solution of \rm{(\ref{eq:weak-eq1-disc}--\ref{eq:weak-eq3-disc})} with compatible initial data $(\bs{u}_h(0), \bs{z}_h(0), p_h(0))$ such that 
\begin{align} \label{eq:initial-data-approx}
\begin{split}
\| \bs{u}(0) - \bs{u}_h(0) \|_{1,h} + \| p(0) - p_h(0)\|_0 &\lesssim h(\| \bs{u}(0) \|_2 + \| p(0) \|_1), \\
\| \bs{z}(0) - \bs{z}_h(0) \|_0 &\lesssim h\| \bs{z}(0) \|_1.
\end{split}
\end{align}
Then 
\begin{align} \label{eq:semi-total-error}
&\| \bs{u} - \bs{u}_h \|_{L^\infty H_h^1} + \| p - p_h \|_{L^\infty L^2} \\
\notag &\quad  \lesssim h \max \{\| \bs{u} \|_{W^{1,1} H^{2}} + \| p \|_{W^{1,1}H^1} , \| \bs{u} \|_{L^\infty H^2} + \| p \|_{L^2 H^2} + \| p \|_{L^\infty H^1}\}.
\end{align}
and 
\begin{align*}
\| \bs{z} - \bs{z}_h \|_{L^\infty L^2} 
\lesssim 
h \max \{ \| p \|_{W^{1,1} H^2}, \| p \|_{L^\infty H^2} + \| \bs{u} \|_{W^{1,2} H^2} + \| p \|_{W^{1,2}H^1} \}.
\end{align*}
\end{thm}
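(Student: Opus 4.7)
I plan to prove this via the standard interpolation-error-plus-discrete-error splitting, but with the pressure estimate engineered to survive the degenerate limit $c_0 \downarrow 0$ by exploiting the Stokes-type inf-sup condition \eqref{eq:sigma-property3} supplied by the Mardal--Tai--Winther element. Write
\[
\bs{u}-\bs{u}_h = (\bs{u}-\Pi_h \bs{u}) + \underbrace{(\Pi_h \bs{u}-\bs{u}_h)}_{=:\bs{\theta}_{\bs{u}}}, \qquad \bs{z}-\bs{z}_h = (\bs{z}-\Pi_h^{RT}\bs{z}) + \underbrace{(\Pi_h^{RT}\bs{z}-\bs{z}_h)}_{=:\bs{\theta}_{\bs{z}}}, \qquad p-p_h = (p-Q_h p) + \underbrace{(Q_h p-p_h)}_{=:\theta_p}.
\]
The interpolation pieces are $O(h)$ immediately from \eqref{eq:approx1} and \eqref{eq:sigma-property2}, so everything comes down to $(\bs{\theta}_{\bs{u}},\bs{\theta}_{\bs{z}},\theta_p)$. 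Subtracting \eqref{eq:weak-eq1-disc}--\eqref{eq:weak-eq3-disc} from the continuous equations (tested against discrete functions) and using the commuting identities $\div \Pi_h \bs{u}=Q_h\div \bs{u}$ and $\div \Pi_h^{RT}\bs{z}=Q_h\div\bs{z}$ yields error equations of the form
\begin{align*}
a_h(\bs{\theta}_{\bs{u}},\bs{v}) - (\theta_p,\div \bs{v}) &= E_1(\bs{v}), \\
(\bs{\theta}_{\bs{z}},\bs{w}) + (\theta_p,\div \bs{w}) &= E_2(\bs{w}), \\
(c_0\dot\theta_p,q) + (\div \dot{\bs{\theta}}_{\bs{u}},q) - (\div \bs{\theta}_{\bs{z}},q) &= 0,
\end{align*}
where $E_1$ collects the nonconformity residual $a_h(\Pi_h\bs{u},\bs{v})-a(\bs{u},\bs{v})$ plus an interpolation defect (bounded by $h\|\bs{u}\|_2$ via \eqref{eq:sigma-property2} and standard nonconforming analysis), $E_2$ vanishes because $\div \Pi_h^{RT}\bs{z}=Q_h\div \bs{z}$ cancels against $Q_h p - p$ correctly, and the third equation is exact (the commuting properties align all divergences in $W_h$).

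For the energy estimate, test the first equation with $\dot{\bs{\theta}}_{\bs{u}}$, the second with $\bs{\theta}_{\bs{z}}$, the third with $\theta_p$, and add. The cross terms $-(\theta_p,\div \dot{\bs{\theta}}_{\bs{u}})+(\div \dot{\bs{\theta}}_{\bs{u}},\theta_p)$ and $(\theta_p,\div \bs{\theta}_{\bs{z}})-(\div \bs{\theta}_{\bs{z}},\theta_p)$ cancel, producing the identity
\[
\tfrac{1}{2}\tfrac{d}{dt}\bigl[a_h(\bs{\theta}_{\bs{u}},\bs{\theta}_{\bs{u}}) + c_0\|\theta_p\|_0^2\bigr] + \|\bs{\theta}_{\bs{z}}\|_0^2 = E_1(\dot{\bs{\theta}}_{\bs{u}}) + E_2(\bs{\theta}_{\bs{z}}).
\]
The $E_1(\dot{\bs{\theta}}_{\bs{u}})$ term is dangerous since it would require a $W^{1,\infty}$-in-time norm of $\bs{\theta}_{\bs{u}}$; I would integrate it by parts in time, $\int_0^t E_1(\dot{\bs{\theta}}_{\bs{u}}) = [E_1(\bs{\theta}_{\bs{u}})]_0^t - \int_0^t \dot E_1(\bs{\theta}_{\bs{u}})$, bound the endpoint and integral contributions by Cauchy--Schwarz, and absorb the $\bs{\theta}_{\bs{u}}$ factors using Lemma \ref{lemma:korn} and coercivity of $a_h$. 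Combined with the initial-data hypothesis \eqref{eq:initial-data-approx}, this produces an $L^\infty$-in-time bound on $\|\bs{\theta}_{\bs{u}}\|_{1,h}$ and an $L^2$-in-time bound on $\|\bs{\theta}_{\bs{z}}\|_0$, both of the right order in $h$.

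The crucial step --- and the reason $c_0$ does not need to be uniformly positive --- is the pressure estimate. Applying the Mardal--Tai--Winther inf-sup condition \eqref{eq:sigma-property3} to the first error equation gives
\[
\beta'\|\theta_p\|_0 \le \sup_{\bs{v}\in \Sigma_h}\frac{(\theta_p,\div \bs{v})}{\|\bs{v}\|_{1,h}} = \sup_{\bs{v}\in \Sigma_h}\frac{a_h(\bs{\theta}_{\bs{u}},\bs{v}) - E_1(\bs{v})}{\|\bs{v}\|_{1,h}} \lesssim \|\bs{\theta}_{\bs{u}}\|_{1,h} + h(\|\bs{u}\|_2+\|p\|_1),
\]
so $\|\theta_p\|_{L^\infty L^2}$ inherits the $L^\infty$ bound already obtained for $\|\bs{\theta}_{\bs{u}}\|_{1,h}$, with no $c_0^{-1}$ factor anywhere. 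This closes the first estimate \eqref{eq:semi-total-error}.

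For the $L^\infty L^2$ bound on $\bs{z}-\bs{z}_h$ I would upgrade the previous energy argument to the time-differentiated system: differentiate the first two error equations in $t$, retest against $\dot{\bs{\theta}}_{\bs{u}}$ and $\dot{\bs{\theta}}_{\bs{z}}$ (equivalently test (E3) with $\dot\theta_p$), and derive
\[
\tfrac{1}{2}\tfrac{d}{dt}\bigl[a_h(\dot{\bs{\theta}}_{\bs{u}},\dot{\bs{\theta}}_{\bs{u}}) + c_0\|\dot\theta_p\|_0^2\bigr] + \|\dot{\bs{\theta}}_{\bs{z}}\|_0^2 = \dot E_1(\dot{\bs{\theta}}_{\bs{u}}),
\]
so that $\|\dot{\bs{\theta}}_{\bs{z}}\|_{L^2 L^2}$ is controlled by one additional time derivative of the data. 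Then the identity $\bs{\theta}_{\bs{z}}(t) = \bs{\theta}_{\bs{z}}(0) + \int_0^t \dot{\bs{\theta}}_{\bs{z}}$ converts this into the asserted $L^\infty L^2$ bound, the extra $W^{1,2}$-in-time norms in the hypothesis of the theorem being exactly what the differentiated energy identity consumes. The main obstacle throughout is the nonconformity of $\Sigma_h$, which pollutes $E_1$ with jump functionals; handling its time derivative $\dot E_1$ (needed in the last step) requires care, but these jumps are supported on edges and can be bounded by the standard Mardal--Tai--Winther consistency analysis together with \eqref{eq:sigma-property2}.
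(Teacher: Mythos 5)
Your overall framework—interpolation splitting, integration by parts in time to avoid testing with $\dot{\bs\theta}_{\bs u}$, the discrete inf-sup condition for the pressure bound (which is precisely how the paper escapes any $c_0^{-1}$ dependence), and a time-differentiated energy estimate for $\bs z-\bs z_h$—matches the paper's. However, the proposal contains concrete errors in the error equations and in the second energy identity that would have to be fixed before the argument goes through.

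First, $E_2$ does \emph{not} vanish. Subtracting the discrete from the continuous second equation and inserting the splitting gives $(\bs\theta_{\bs z},\bs w)+(\theta_p,\div\bs w)=-(\bs z-\Pi_h^{RT}\bs z,\bs w)$; the pressure interpolation error drops because $\div V_h\subset W_h$, but the $\bs z$-interpolation error survives because $\Pi_h^{RT}$ is \emph{not} the $L^2$-projection onto $V_h$. Similarly, the third error equation is not exact: the commuting properties kill $(\div\dot e_{\bs u}^I,q)$ and $(\div e_{\bs z}^I,q)$, but the term $-(c_0\dot e_p^I,q)$ remains. These two right-hand sides ($\Phi_3$ and $\Phi_4$ in the paper's notation) must be carried through the energy estimate; they are harmless but not zero, and the paper's case analysis via Lemma~\ref{lemma:basic-ineq} is designed precisely to handle $\Phi_3$ without destroying the $L^2$-in-time control of $\bs\theta_{\bs z}$.

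Second, the energy identity you write for the $\bs z$-estimate is wrong. Testing the differentiated second equation with $\dot{\bs\theta}_{\bs z}$ produces the cross term $(\dot\theta_p,\div\dot{\bs\theta}_{\bs z})$, which does \emph{not} cancel against $-(\div\bs\theta_{\bs z},\dot\theta_p)$ from the third equation tested with $\dot\theta_p$. The correct combination is to test the differentiated first and second equations with $\dot{\bs\theta}_{\bs u}$ and $\bs\theta_{\bs z}$ (not $\dot{\bs\theta}_{\bs z}$), and the undifferentiated third equation with $\dot\theta_p$, yielding
\begin{align*}
\tfrac12\tfrac{d}{dt}\|\bs\theta_{\bs z}\|_0^2+\|\dot{\bs\theta}_{\bs u}\|_{a,h}^2+\|\dot\theta_p\|_{c_0}^2 = \text{(consistency terms)},
\end{align*}
i.e.\ the time derivative sits on $\|\bs\theta_{\bs z}\|_0^2$, not on $\|\dot{\bs\theta}_{\bs u}\|_{a,h}^2+c_0\|\dot\theta_p\|_0^2$. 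Your version would only give an $L^2$-in-time bound on $\dot{\bs\theta}_{\bs z}$, and passing from that to $\|\bs\theta_{\bs z}\|_{L^\infty L^2}$ via $\bs\theta_{\bs z}(t)=\bs\theta_{\bs z}(0)+\int_0^t\dot{\bs\theta}_{\bs z}$ would cost a factor of $\sqrt{T_0}$, weakening the uniform-in-time nature of the estimate; with the correct identity, the $L^\infty$ bound on $\bs\theta_{\bs z}$ is read off directly after integrating in time and applying the case analysis of Lemma~\ref{lemma:basic-ineq}.
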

\begin{rmk} Compatible initial data satisfying \eqref{eq:initial-data-approx} can be found by 
setting $p_h(0) = Q_h p(0)$ and finding $\bs{u}_h(0)$ and $\bs{z}_h(0)$ using \rm{(\ref{eq:ic-comp1}--\ref{eq:ic-comp2})}. 
\end{rmk}
For the proof of Theorem \ref{thm:semidiscrete-error} we need some preliminary results.

\begin{lemma} \label{lemma:consistency-err-bound}
For $\utilde{\bs{\tau}} \in H^1(\Omega; \R^{n \times n})$ and $\bs{v} \in \Sigma_h$ define 
\begin{align*}
E_h (\utilde{\bs{\tau}}, \bs{v} ) := \sum_{E \in \mathcal{E}_h} \langle \utilde{\bs{\tau}} \bs{n}_E, \lb \bs{v} \rb \rangle_E .
\end{align*}
Then 
\begin{align*}
| E_h (\utilde{\bs{\tau}}, \bs{v}) | \lesssim h \| \utilde{\bs{\tau}} \|_1 \| \bs{v} \|_{1,h}. 
\end{align*}
\end{lemma}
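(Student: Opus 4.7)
The plan is to exploit the orthogonality provided by the Mardal--Tai--Winther DOFs. By the single-valuedness of these DOFs across interior edges and by \eqref{eq:sigma-property5} on edges in $\Gamma_d$, for every such $E$ and every $\bs{v} \in \Sigma_h$ one has
\begin{align*}
\int_E \lb \bs{v} \cdot \bs{n}_E \rb \, r \, ds = 0 \quad \forall r \in \mathcal{P}_1(E), \qquad \int_E \lb \bs{v} \cdot \bs{t}_E \rb \, ds = 0.
\end{align*}
Boundary edges on $\Gamma_t$, where $\bs{v}$ has no constraint, will be handled in the intended applications by the traction-free condition $\utilde{\bs{\tau}} \bs{n} = 0$, so I focus on the remaining edges.

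On each such $E$ I decompose $\utilde{\bs{\tau}} \bs{n}_E$ into its normal and tangential components and, using the two identities above, subtract the $L^2(E)$-projection of $\utilde{\bs{\tau}} \bs{n}_E \cdot \bs{n}_E$ onto $\mathcal{P}_1(E)$ and the edge average of $\utilde{\bs{\tau}} \bs{n}_E \cdot \bs{t}_E$, respectively, without altering the pairing against $\lb \bs{v} \rb$. A standard scaled trace inequality followed by Poincar\'e on an adjacent element $T$ then gives the approximation estimate $h_E^{1/2} |\utilde{\bs{\tau}}|_{1,T}$ for the resulting residuals in $L^2(E)$.

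For the jump factor, the two moment identities imply in particular that $\lb \bs{v} \cdot \bs{n}_E \rb$ and $\lb \bs{v} \cdot \bs{t}_E \rb$ both have zero mean on $E$. An edge Poincar\'e inequality then yields $\|\lb \bs{v} \rb\|_{0,E} \lesssim h_E \|\nabla_s \lb \bs{v} \rb \|_{0,E}$, where $\nabla_s$ denotes the surface gradient on $E$. Because $\bs{n}_E$ and $\bs{t}_E$ are constant along $E$, $\nabla_s \lb \bs{v} \rb$ reduces to a tangential derivative of the piecewise polynomial $\bs{v}$, so a trace inequality combined with an inverse inequality on each element containing $E$ gives $\|\nabla_s \lb \bs{v} \rb\|_{0,E} \lesssim h_E^{-1/2} \|\nabla_h \bs{v}\|_0$ over the union of those elements. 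Hence $\|\lb \bs{v} \rb\|_{0,E} \lesssim h_E^{1/2} \|\nabla_h \bs{v}\|_0$ over the same patch.

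Applying Cauchy--Schwarz edgewise produces a per-edge bound of order $h_E |\utilde{\bs{\tau}}|_{1,T} \|\nabla_h \bs{v}\|_{0}$ over the adjacent patch; summing over $E \in \mathcal{E}_h$, using a discrete Cauchy--Schwarz, and invoking shape regularity to absorb the bounded overlap of patches yields $|E_h(\utilde{\bs{\tau}}, \bs{v})| \lesssim h \|\utilde{\bs{\tau}}\|_1 \|\bs{v}\|_{1,h}$. The main technical point is the edge-Poincar\'e/trace/inverse chain that squeezes an $h_E^{1/2}$ factor out of $\|\lb \bs{v} \rb\|_{0,E}$; once this is in hand, the estimate is a routine application of MTW orthogonality and standard approximation theory.
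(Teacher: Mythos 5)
Your argument is correct and essentially the paper's own proof: both exploit that the MTW degrees of freedom make $\lb \bs{v} \rb$ orthogonal to constants on each relevant edge, subtract a constant (you subtract a slightly richer projection) from $\utilde{\bs{\tau}}\bs{n}_E$, and extract a factor $h_E^{1/2}$ from each of the two edge norms before applying Cauchy--Schwarz and summing, your edge-Poincar\'e-plus-inverse-inequality bound $\| \lb \bs{v} \rb \|_{0,E} \lesssim h_E^{1/2}\|\nabla_h \bs{v}\|_{0,T_+\cup T_-}$ being only a minor variant of the paper's elementwise trace/Poincar\'e scaling (which needs no inverse estimate). Your explicit caveat about edges on $\Gamma_t$, where the jump has no zero-mean property and the corresponding terms are harmless only because $\utilde{\bs{\sigma}}\bs{n}=0$ there in the intended application, concerns a point the paper's proof passes over silently, so it is a fair reading of the lemma rather than a gap in your argument.
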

\begin{proof}
Let $E \in \mathcal{E}_h$ be an interior edge/face in $\Omega$ and $T_+$, $T_-$ be two distinct triangles/tetrahedra sharing $E$ as the common boundary. The shape regularity of $\mathcal{T}_h$ yields $h_{T_-} \sim h_{T_+} \sim h_E$ where $h_{T_-}$, $h_{T_+}$, $h_E$ are the diameters of $T_-$, $T_+$, $E$, respectively. 

Since $\lb \bs{v} \rb|_E$ is perpendicular to $\mathcal{P}_0(E; \R^n)$, 
\begin{align*}
\langle \utilde{\bs{\tau}} \bs{n}_E, \lb \bs{v} \rb \rangle_E =  \langle \utilde{\bs{\tau}} \bs{n}_E - \bs{c}, \lb \bs{v} \rb \rangle_E \leq \inf_{\bs{c} \in \R^n} \| \utilde{\bs{\tau}} \bs{n}_E - \bs{c} \|_{0,E} \| \lb \bs{v} \rb \|_{0,E}. 
\end{align*}
By a standard scaling argument and shape regularity of meshes one can see that 
\begin{align*}
\| \lb \bs{v} \rb \|_{0,E} &\lesssim h_E^{\half} \| \nabla \bs{v} \|_{0, T_+ \cup T_-}, \\
\inf_{\bs{c} \in \R^n} \| \utilde{\bs{\tau}} \bs{n}_E - \bs{c} \|_{0,E} &\lesssim h_E^{\half} \| \nabla \bs{\tau} \|_{0, T_+ \cup T_-}. 
\end{align*}
For an edge/face $E$ on boundary and the triangle $T \in \mathcal{T}_h$ containing $E$ in its boundary, a similar argument gives 
\begin{align*}
\| \lb \bs{v} \rb \|_{0,E} &\lesssim h_E^{\half} \| \nabla \bs{v} \|_{0, T}, \\
\inf_{\bs{c} \in \R^n} \| \utilde{\bs{\tau}} \bs{n}_E - \bs{c} \|_{0,E} &\lesssim h_E^{\half} \| \nabla \bs{\tau} \|_{0, T}. 
\end{align*}
By the Cauchy--Schwarz inequality and the above results, we have 
\begin{align*}
E_h(\utilde{\bs{\tau}}, \bs{v}) \lesssim h \| \utilde{\bs{\tau}} \|_1 \| \bs{v} \|_{1,h} .
\end{align*} 
as desired. 
\end{proof}

The following simple lemma will be useful in our error analysis.

\begin{lemma} \label{lemma:basic-ineq}
Suppose that $A, B, C, D >0$ satisfy 
\begin{align*}
A^2 + B^2 \leq CA + D. 
\end{align*}
Then either $A + B \leq 4C$ or $A + B \leq 2\sqrt{D}$ holds.
\end{lemma}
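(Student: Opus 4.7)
The plan is to perform a dichotomy based on the relative size of $CA$ and $D$ on the right-hand side of the hypothesis $A^2 + B^2 \leq CA + D$. The intuition is that whichever term dominates on the right dictates which of the two alternative conclusions is obtained.

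First I would consider the regime $CA \leq D$. In this case the hypothesis gives $A^2 + B^2 \leq 2D$, and the elementary inequality $(A+B)^2 \leq 2(A^2+B^2)$ immediately yields $A + B \leq 2\sqrt{D}$, which is the second alternative in the statement.

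Second, in the complementary regime $CA > D$, I would absorb $D$ into $CA$ to obtain $A^2 + B^2 \leq 2CA$. Completing the square in $A$ rewrites this as $(A-C)^2 + B^2 \leq C^2$, which forces $|A - C| \leq C$ and $B \leq C$. Hence $A \leq 2C$ and $B \leq C$, so $A + B \leq 3C \leq 4C$, giving the first alternative.

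I do not foresee any real obstacle; the only point requiring mild care is the bookkeeping of constants so that each branch of the dichotomy yields exactly one of the two clean bounds stated in the lemma (the constant $4$ in the first alternative is slightly loose — the argument actually produces $3C$ — but $4C$ is what the lemma claims and is what will be used later in the error analysis).
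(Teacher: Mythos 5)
Your proof is correct, and the opening dichotomy on whether $CA \leq D$ or $CA > D$ is exactly the paper's key move; the first branch ($A+B \leq 2\sqrt{D}$) is handled identically. The second branch is where you diverge: the paper proves $A^2 + B^2 \leq 2CA$ implies $A+B \leq 4C$ by splitting into sub-cases $B \geq A$ and $B \leq A$ and dividing through by $A$, whereas you complete the square to get $(A-C)^2 + B^2 \leq C^2$, read off $A \leq 2C$ and $B \leq C$ directly, and obtain the sharper bound $A+B \leq 3C$. Your route is cleaner — it avoids the sub-case split entirely and makes the geometry transparent (the point $(A,B)$ lies in a disk of radius $C$ centered at $(C,0)$) — and it shows the lemma's constant $4$ is not tight, though as you note $4C$ is all that is used downstream. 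Either proof is perfectly adequate for the paper's purposes.
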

\begin{proof}
Since either $CA \leq D$ or $CA \geq D$ is true, one of the followings holds.
\begin{align}
\label{eq:two-ineq} A^2 + B^2 \leq 2D, \qquad A^2 + B^2 \leq 2CA.
\end{align}

If the first inequality in \eqref{eq:two-ineq} holds, then 
\begin{align*}
(A+B)^2 \leq 2(A^2 + B^2) \leq 4D, 
\end{align*}
which implies $A + B \leq 2\sqrt{D}$. 

Suppose that the second inequality in \eqref{eq:two-ineq} holds. If $B \geq A$, then dividing the inequality by $A$ gives 
\begin{align*}
A + B \leq (A^2 + B^2)/A \leq 2C.
\end{align*}
If $B \leq A$, then dividing the second inequality in \eqref{eq:two-ineq} by $A$ gives $A \leq 2C$, and therefore we have $A + B \leq 2A \leq 4C$ as desired. 
\end{proof}

Now we are ready to prove Theorem \ref{thm:semidiscrete-error}. 

\begin{proof}[of Theorem \ref{thm:semidiscrete-error}] We denote the errors by  
\begin{align*}
e_{\bs{u}} = \bs{u} - \bs{u}_h, \quad e_{\bs{z}} = \bs{z} - \bs{z}_h, \quad e_p = p - p_h.
\end{align*}
From the left-hand side of \eqref{eq:new-strong-eq1}, through the integration by parts, we obtain 
\begin{align*} 
-(\div \mathcal{C} \e(\bs{u}) + \nabla p, \bs{v}) = (\mathcal{C} \e(\bs{u}), \e_h(\bs{v})) - (p, \div \bs{v}) + E_h(\utilde{\bs{\sigma}} , \bs{v}), \quad \bs{v} \in \Sigma_h.
\end{align*}
The difference of this and \eqref{eq:weak-eq1-disc} gives
\begin{align} 
\label{eq:err-eq1} a_h(e_{\bs{u}}, \bs{v}) - (e_p, \div \bs{v}) = - E_h(\utilde{\bs{\sigma}} , \bs{v}), & & \bs{v} \in \Sigma_h.
\end{align}
In addition, the differences of (\ref{eq:weak-eq2}--\ref{eq:weak-eq3}) and (\ref{eq:weak-eq2-disc}--\ref{eq:weak-eq3-disc}) yield
\begin{align}
\label{eq:err-eq2} ( e_{\bs{z}}, \bs{w}) + (e_p, \div \bs{w}) &= 0, & & \bs{w} \in V_h, \\
\label{eq:err-eq3} (c_0 \dot{e}_p, q) + (\div \dot{e}_{\bs{u}}, q) - (\div e_{\bs{z}}, q) &= 0, & & q \in W_h.
\end{align}
For the error analysis we split the errors into 
\begin{align}
\label{eq:err-split1} e_{\bs{u}} = e_{\bs{u}}^I + e_{\bs{u}}^A &:= (\bs{u} - \Pi_h \bs{u}) + ( \Pi_h \bs{u} - \bs{u}_h), \\
\label{eq:err-split2} e_{\bs{z}} = e_{\bs{z}}^I + e_{\bs{z}}^A &:= (\bs{z} - \Pi_h^{RT} \bs{z}) + (\Pi_h^{RT} \bs{z} - \bs{z}_h), \\
\label{eq:err-split3} e_p = e_p^I + e_p^A &:= (p - Q_h p) + (Q_h p - p_h).
\end{align}
By \eqref{eq:approx1} and \eqref{eq:sigma-property2}
\begin{align} \label{eq:interp-approx}
\| e_{\bs{u}}^I (t) \|_{1,h} \lesssim h \| \bs{u} (t) \|_2, \quad \| e_{\bs{z}}^I (t) \|_0 \lesssim h \| \bs{z} (t) \|_1, \quad \| e_p^I (t) \|_0 \lesssim h \| p(t) \|_1.
\end{align}
Similar inequalities also hold for the time derivatives of $e_{\bs{u}}^I$, $e_{\bs{z}}^I$, $e_p^I$. These inequalities, \eqref{eq:initial-data-approx}, and the triangle inequality give
\begin{align} \label{eq:initial-data-approx2}
\begin{split}
\| e_{\bs{u}}^A (0) \|_{1,h} + \| e_p^A(0) \|_0 &\lesssim h (\| \bs{u}(0) \|_2 + \| p(0) \|_1) \\
\| e_{\bs{z}}^A (0) \|_0 &\lesssim h\| \bs{z}(0) \|_1. 
 \end{split}
\end{align}
Furthermore, by the definitions of $e_{\bs{u}}^I$, $e_{\bs{z}}^I$, $e_p^I$ and the properties in \eqref{eq:commute1}, 
\begin{gather}
\label{eq:err-cancel1} \div \dot{e}_{\bs{u}}^I \perp W_h, \qquad \div e_{\bs{z}}^I \perp W_h, \qquad e_p^I \perp \div \Sigma_h. 
\end{gather}
Rewriting (\ref{eq:err-eq1}--\ref{eq:err-eq3}) using (\ref{eq:err-split1}--\ref{eq:err-split3}) and the orthogonalities in \eqref{eq:err-cancel1}, we have 
\begin{align}
\label{eq:new-err-eq1} a_h(e_{\bs{u}}^A, \bs{v}) - (e_p^A, \div \bs{v}) &= - a_h(e_{\bs{u}}^I, \bs{v}) - E_h(\utilde{\bs{\sigma}} , \bs{v}), \\
\label{eq:new-err-eq2} ( e_{\bs{z}}^A, \bs{w}) + (e_p^A, \div \bs{w}) &= - ( e_{\bs{z}}^I, \bs{w}), \\
\label{eq:new-err-eq3} (c_0 \dot{e}_p^A, q) + (\div \dot{e}_{\bs{u}}^A, q) - (\div e_{\bs{z}}^A, q) &= - (c_0 \dot{e}_p^I, q) , 
\end{align}
for any $(\bs{v}, \bs{w}, q) \in \Sigma_h \times V_h \times W_h$. The rest of proof consists of three parts: estimates of $\| \bs{u} - \bs{u}_h \|_{L^\infty H_h^1}$, $\| \bs{z} - \bs{z}_h \|_{L^\infty L^2}$, and $\| p - p_h \|_{L^\infty L^2}$. 

{\bf Estimate of $\| \bs{u} - \bs{u}_h \|_{L^\infty H_h^1}$} :
Let $X(t), Y(t)$ be 
\begin{align*}
X^2(t) = \| e_{\bs{u}}^A (t) \|_{a,h}^2 + \| e_p^A (t) \|_{c_0}^2, \qquad 
Y^2(t) = \int_0^t \| e_{\bs{z}}^A \|_0 \,ds,
\end{align*}
where $\| \bs{v} \|_{a,h}^2 := a_h(\bs{v}, \bs{v})$ and $\| q \|_{c_0}^2 := (c_0 q, q)$. Since $\| \cdot \|_{1,h} \sim \| \cdot \|_{a,h}$, we will estimate $\max_{0 \leq t \leq T_0} X(t)$ instead of $\max_{0\leq t \leq T_0} \| e_{\bs{u}}^A (t)\|$. 

If we take $\bs{v} = \dot{e}_{\bs{u}}^A$, $\bs{w} = e_{\bs{z}}^A$, $q = e_p^A$ in (\ref{eq:new-err-eq1}--\ref{eq:new-err-eq3}), and add those equations together, then 
\begin{align*}
\half \frac{d}{dt} (X(t))^2 + \| e_{\bs{z}}^A (t) \|_0^2 = - a_h(e_{\bs{u}}^I, \dot{e}_{\bs{u}}^A ) - E_h(\utilde{\bs{\sigma}} , \dot{e}_{\bs{u}}^A ) - ( e_{\bs{z}}^I, e_{\bs{z}}^A) - (c_0 \dot{e}_p^I, e_p^A). 
\end{align*}
Integrating this from $0$ to $t$ in time,
\begin{align}
\label{eq:interm-err-eq1}& (X(t))^2 + 2(Y(t))^2 \\
\notag & = (X(0))^2 + 2\int_0^t \left ( - a_h(e_{\bs{u}}^I, \dot{e}_{\bs{u}}^A ) - E_h(\utilde{\bs{\sigma}} , \dot{e}_{\bs{u}}^A) - (e_{\bs{z}}^I, e_{\bs{z}}^A) - (c_0 \dot{e}_p^I, e_p^A) \right) \,ds, \\
\notag & =: (X(0))^2 + \Phi_1(t) + \Phi_2(t) + \Phi_3(t) + \Phi_4(t). 
\end{align}
Defining $\bar t$ by
\begin{align*} 
X(\bar{t}) = \sup_{0 \leq t \leq T_0} X(t),
\end{align*}
it suffices to estimate $X(\bar t)$. Moreover, since 
\begin{align*}
\text{either }\quad \Phi_1(\bar t) + \Phi_2(\bar t) + \Phi_4(\bar t) \leq \Phi_3(\bar t)  \qquad \text{or} \qquad  \Phi_1(\bar t) + \Phi_2(\bar t) + \Phi_4(\bar t) \geq \Phi_3(\bar t) ,
\end{align*}
is always true, we have, from \eqref{eq:interm-err-eq1}, that
\begin{align}
\label{eq:estm-case1} &\text{either} & (X({\bar t}))^2 + 2(Y({\bar t}))^2 &\leq (X(0))^2 + 2 \Phi_3(\bar t), \\
\label{eq:estm-case2} &\text{or } & (X({\bar t}))^2 + 2(Y({\bar t}))^2 &\leq (X(0))^2 + 2(\Phi_1(\bar t) + \Phi_2(\bar t) + \Phi_4(\bar t) ).
\end{align}
Thus it is enough to prove an estimate of $X(\bar{t})$ for these two cases. 


{\it Case I} : Suppose that \eqref{eq:estm-case1} is true. Note that 
\begin{align*} 
2\Phi_2(\bar t) = -4\int_0^{\bar{t}} ( e_{\bs{z}}^I, e_{\bs{z}}^A) \,ds\leq 2 \int_0^{\bar{t}} \| e_{\bs{z}}^I \|_0^2 \,ds + 2(Y(\bar{t}))^2,
\end{align*}
by Young's inequality and the definition of $Y(\bar t)$. Combining it with \eqref{eq:estm-case1} gives 
\begin{align*}
X^2(\bar{t}) &\leq X^2(0) + 2 \int_0^{\bar{t}} \| e_{\bs{z}}^I \|_0^2 \,ds 
\leq X(0)^2 + ch^2 \| \bs{z} \|_{L^2 H^1}^2. 
\end{align*}
Estimating $X(0)$ using \eqref{eq:initial-data-approx2} and taking square roots, 
\begin{align} \label{eq:semi-x-estm1}
X(\bar t) &\lesssim h (\| \bs{u}(0) \|_2  + \| p(0) \|_1 + \| \bs{z} \|_{L^2 H^1}) \\
\notag & \lesssim h ( \| \bs{u} (0) \|_2 + \| p(0) \|_1 + \| p \|_{L^2 H^2}).
\end{align}

{\it Case II} : Suppose that \eqref{eq:estm-case2} is true. To estimate $\Phi_1(\bar t)$, we use the integration by parts in time, which gives
\begin{align} \label{eq:semi-Phi1-estm} 
\Phi_1(\bar t) &= -2 \int_0^{\bar t} a_h(e_{\bs{u}}^I, \dot{e}_{\bs{u}}^A) \, ds , \\
\notag &= 2 a_h(e_{\bs{u}}^I(\bar t), e_{\bs{u}}^A(\bar t)) - 2 a_h(e_{\bs{u}}^I(0), e_{\bs{u}}^A(0)) + 2 \int_0^{\bar t}  a_h(\dot e_{\bs{u}}^I , e_{\bs{u}}^A) \, ds \\
\notag &\leq 2\left( \| e_{\bs{u}}^I (\bar t) \|_{a,h} + \| e_{\bs{u}}^I (0) \|_{a,h} + \int_0^{\bar t} \| \dot e_{\bs{u}}^I (0) \|_{a,h} \,ds \right) X(\bar t) \\
\notag &\leq ch \| \bs{u} \|_{W^{1,1}H^2} X(\bar t). 
\end{align}
Similarly, 
\begin{align} \label{eq:semi-Phi2-estm} 
\Phi_2(\bar t) &= -2\int_0^{\bar t}  E_h(\utilde{\bs{\sigma}}, \dot{e}_{\bs{u}}^A )  \, ds \\
\notag &= 2\left( - E_h(\utilde{\bs{\sigma}}(\bar t) , e_{\bs{u}}^A(\bar t)) + E_h( \utilde{\bs{\sigma}}(0) , e_{\bs{u}}^A(0)) \right) + 2\int_0^{\bar t}  E_h(\dot{\utilde{\bs{\sigma}}} , e_{\bs{u}}^A) \, ds \\
\notag &\leq ch \left( \| \utilde{\bs{\sigma}}(\bar t) \|_1 + \| \utilde{\bs{\sigma}}(0) \|_1 + \int_0^{\bar t} \| \dot{\utilde{\bs{\sigma}}} \|_1 \,ds \right) X(\bar t) \\
\notag &\leq ch \| \utilde{\bs{\sigma}} \|_{W^{1,1}H^1} X(\bar t).  
\end{align}
For $\Phi_4(\bar t)$, 
\begin{align}
\label{eq:semi-Phi4-estm} 
\Phi_4 (\bar t) = -2 \int_0^{\bar t} (\dot{e}_p^I, e_p^A)_{c_0} \,ds \leq 
2  \int_0^{\bar t} \| \dot{e}_p^I \|_{c_0} \,ds  X(\bar{t}) \leq ch \| p \|_{W^{1,1}H^1} X(\bar t).  
\end{align}
Combining \eqref{eq:estm-case2}, \eqref{eq:semi-Phi1-estm}, \eqref{eq:semi-Phi2-estm}, and \eqref{eq:semi-Phi4-estm}, we have an inequality of the form in Lemma \ref{lemma:basic-ineq} with $A = X(\bar t)$, $B = \sqrt{2} Y(\bar t)$, $D = X(0)^2$, and 
\begin{align*}
C = ch  (\| \utilde{\bs{\sigma}}, p \|_{W^{1,1}H^1} + \| \bs{u} \|_{W^{1,1}H^2}) . 
\end{align*}
Note that $\| \utilde{\bs{\sigma}} \|_{W^{1,1}H^1} \lesssim \| \bs{u} \|_{W^{1,1} H^2} + \| p \|_{W^{1,1} H^1}$ because $\utilde{\bs{\sigma}} = \mathcal{C}\e(\bs{u}) - p \utilde{\bs{I}}$. By Lemma \ref{lemma:basic-ineq} and \eqref{eq:initial-data-approx2} 
\begin{align}
\label{eq:semi-x-estm2}
\begin{split} 
&X(\bar t) \lesssim ch (\| \utilde{\bs{\sigma}}, p \|_{W^{1,1} H^1} + \| \bs{u} \|_{W^{1,1}H^2}) \lesssim h (\| p \|_{W^{1,1} H^1} + \| \bs{u} \|_{W^{1,1}H^2}) \\
\text{ or } \; &X(\bar t)  \lesssim h (\| \bs{u}(0) \|_2 + \| p(0) \|_1 ) .
\end{split} 
\end{align}
Now we complete the proof of the estimate of $\| \bs{u} - \bs{u}_h \|_{L^{\infty} H_h^1}$.  By the triangle inequality and the inequality $\| e_{\bs{u}}^A(t) \|_{1,h} \lesssim X(\bar t)$, 
\begin{align*}
\| \bs{u}(t) - \bs{u}_h (t) \|_{1,h} \leq \| e_{\bs{u}}^I (t) \|_{1,h} + \| e_{\bs{u}}^A (t) \|_{1,h} \lesssim \| e_{\bs{u}}^I (t) \|_{1,h} + X(\bar t) .
\end{align*}
By \eqref{eq:interp-approx}, the triangle inequality, \eqref{eq:semi-x-estm1}, \eqref{eq:semi-x-estm2}, and \eqref{eq:sobolev},
\begin{multline*} 
\| \bs{u}(t) - \bs{u}_h (t) \|_{1,h} \\
\lesssim h \max \{\| \bs{u} \|_{W^{1,1} H^2} + \| p \|_{W^{1,1}H^1}, \| \bs{u} \|_{L^\infty H^2} + \| p \|_{L^2 H^2} + \| p \|_{L^\infty H^1}\},
\end{multline*}
for any $0 \leq t \leq T_0$, which is the estimate for $\| \bs{u} - \bs{u}_h \|_{L^\infty H_h^1}$ in \eqref{eq:semi-total-error}.

{\bf Estimate of $\| p - p_h \|_{L^\infty L^2}$} : 
By the inf-sup condition \eqref{eq:inf-sup1}, for any $0 \not = q \in W_h$, there exists a $\bs{v} \in \Sigma_h$ such that 
\begin{align*}
(\div \bs{v}, q') = (q, q'), \quad \forall q' \in W_h, \qquad \| \bs{v} \|_{1,h} \lesssim \| q \|_0. 
\end{align*}
If we use this $\bs{v}$ in \eqref{eq:new-err-eq1} with $q = e_p^A(t)$, then 
\begin{align*}
\| e_p^A (t) \|_0^2 &= a_h(\bs{u}(t) - \bs{u}_h(t), \bs{v}) + E_h(\utilde{\bs{\sigma}}(t) , \bs{v}) \\
&\lesssim (\| \bs{u} (t) - \bs{u}_h (t) \|_{1,h} + h\| \utilde{\bs{\sigma}} (t) \|_1) \| \bs{v} \|_{1,h} ,
\end{align*}
where the last inequality is due to the estimate of $\| \bs{u} - \bs{u}_h \|_{L^\infty H_h^1}$. By the triangle inequality, the above estimate, \eqref{eq:interp-approx}, and the estimate of $\| \bs{u} - \bs{u}_h \|_{L^\infty H_h^1}$, 
\begin{multline*}
\| p - p_h \|_{L^{\infty} L^2} \\
\lesssim h \max \{ \| \bs{u} \|_{W^{1,1} H^2} +\| p \|_{W^{1,1} H^1}, \| \bs{u} \|_{L^\infty H^2} + \| p \|_{L^2 H^2} + \| p \|_{L^{\infty}H^1}\},
\end{multline*}
so \eqref{eq:semi-total-error} is proven.

{\bf Estimate of $\| \bs{z} - \bs{z}_h \|_{L^\infty L^2}$} : 
The time derivatives of the equations (\ref{eq:new-err-eq1}--\ref{eq:new-err-eq2}) give 
\begin{align*}
a_h(\dot e_{\bs{u}}^A, \bs{v}) - (\dot e_p^A, \div \bs{v}) &= - a_h(\dot e_{\bs{u}}^I, \bs{v}) - E_h(\dot{\utilde{\bs{\sigma}}} , \bs{v}), & & \bs{v} \in \Sigma_h, \\
(\dot e_{\bs{z}}^A, \bs{w}) + (\dot e_p^A, \div \bs{w}) &= - ( \dot e_{\bs{z}}^I, \bs{w}), & & \bs{w} \in V_h, 
\end{align*}
Taking $\bs{v} = \dot e_{\bs{u}}^A$, $\bs{w} = e_{\bs{z}}^A$ in the above, taking $q = \dot e_p^A$ in \eqref{eq:new-err-eq3}, and adding these three equations together yield
\begin{align*}
\half \frac{d}{dt} \| e_{\bs{z}}^A \|_0^2 + \| \dot e_{\bs{u}}^A \|_{a,h}^2 + \| \dot e_p^A \|_{c_0}^2 
&= - a_h(\dot e_{\bs{u}}^I, \dot e_{\bs{u}}^A) - E_h(\dot{\utilde{\bs{\sigma}}} , \dot e_{\bs{u}}^A) - ( \dot e_{\bs{z}}^I, e_{\bs{z}}^A) \\
&\quad + (c_0 \dot e_p^I, \dot e_p^A).
\end{align*}
Let $\| e_{\bs{z}}^A(\bar t) \|_0 = \max_{0\leq t \leq T_0} \| e_{\bs{z}}^A(t) \|_0$. By integrating the above from 0 to $\bar t$,
\begin{align*}
&\| e_{\bs{z}}^A (\bar t) \|_0^2 + \int_0^{\bar t} \{ \| \dot e_{\bs{u}}^A \|_{a,h}^2 + \| \dot e_p^A \|_{c_0}^2 \} ds \\
\notag &= \| e_{\bs{z}}^A (0) \|_0^2 + \int_0^{\bar t} \{ - a_h(\dot e_{\bs{u}}^I, \dot e_{\bs{u}}^A) - E_h(\dot{\utilde{\bs{\sigma}}} , \dot e_{\bs{u}}^A) - ( \dot e_{\bs{z}}^I, e_{\bs{z}}^A) + (c_0 \dot e_p^I, \dot e_p^A)  \} ds \\
\notag &=: \| e_{\bs{z}}^A (0) \|_0^2 + \Psi_1(\bar t) + \Psi_2(\bar t) + \Psi_3(\bar t) + \Psi_4(\bar t).
\end{align*}

If $\Psi_1(\bar t) + \Psi_2(\bar t) + \Psi_4(\bar t) \leq \Psi_3(\bar t)$, then 
\begin{align*}
\| e_{\bs{z}}^A (\bar t) \|_0^2 \leq \| e_{\bs{z}}^A (0) \|_0^2 + 2 \Psi_3(\bar t).
\end{align*}
Using the Cauchy--Schwarz inequality and dividing both sides by $\| e_{\bs{z}}^A (\bar t) \|_0$ yield
\begin{align*} 
\| e_{\bs{z}}^A (\bar t) \|_0 &\leq \| e_{\bs{z}}^A (0) \|_0 + 2 \int_0^{\bar t} \| \dot e_{\bs{z}}^I \|_0 ds .
\end{align*}
If we use \eqref{eq:initial-data-approx2} and \eqref{eq:interp-approx} to estimate $\| e_{\bs{z}}^A(0) \|_0$ and the integral term, then 
\begin{align} \label{eq:semi-z-estm1}
\| e_{\bs{z}}^A (\bar t) \|_0 \lesssim h( \| \bs{z}(0) \|_1 + \| \bs{z} \|_{W^{1,1} H^1}) \lesssim h \|\bs{z} \|_{W^{1,1}H^1},
\end{align}
where the last inequality is due to \eqref{eq:sobolev}. 

On the other hand, if $\Psi_1(\bar t) + \Psi_2(\bar t) + \Psi_4(\bar t) \geq \Psi_3(\bar t)$, then 
\begin{align*}
\| e_{\bs{z}}^A (\bar t) \|_0^2 + \int_0^{\bar t} \{ \| \dot e_{\bs{u}}^A \|_{a,h}^2 + \| \dot e_p^A \|_{c_0}^2 \} ds \leq \| e_{\bs{z}}^A (0) \|_0^2 + 2 (\Psi_1(\bar t) + \Psi_2(\bar t) + \Psi_4(\bar t)).
\end{align*}
The Cauchy--Schwarz and Young's inequalities yield
\begin{align} \label{eq:semi-z-estm2}
\| e_{\bs{z}}^A (\bar t) \|_0^2 &\leq \| e_{\bs{z}}^A (0) \|_0^2 + ch^2 (\| \bs{u} \|_{W^{1,2}([0,\bar{t}];H^2)}^2 + \| \utilde{\bs{\sigma}}, p \|_{W^{1,2}([0,\bar{t}]; H^1)}^2) \\
\notag &\lesssim h^2( \| \bs{z}(0) \|_1^2  + \| \bs{u} \|_{W^{1,2} H^2}^2 + \| p \|_{W^{1,2}H^1}^2).
\end{align}
By the triangle inequality, \eqref{eq:interp-approx} and estimates \eqref{eq:semi-z-estm1}, \eqref{eq:semi-z-estm2}, 
\begin{align*}
\| \bs{z} - \bs{z}_h \|_{L^{\infty}L^2} &\lesssim \| e_{\bs{z}}^I \|_{L^\infty L^2} + \| e_{\bs{z}}^A \|_{L^{\infty}L^2} \\
&\lesssim h \max \{ \| \bs{z} \|_{W^{1,1} H^1}, \| \bs{z} \|_{L^\infty H^1} + \| \bs{u} \|_{W^{1,2} H^2} + \| p \|_{W^{1,2}H^1} \}.
\end{align*}

\end{proof}

\section{Error analysis of fully discrete solutions} \label{sec:fullydiscrete-analysis}
In this section we consider the error analysis of the fully discrete solutions with the backward Euler time discretization. As in our error analysis for the semidiscrete solutions we do not use Gr\"{o}nwall's inequality, so our error bounds do not contain exponentially growing factors. 

\subsection{Fully-discrete problem and its well-posedness}
Let $\lap t >0$ be the time step size such that $T_0 = N \lap t$ for an integer $N$,  and $t_j = j \lap t$ for $j = 0, 1, \cdots, N$. For a continuous function $f$ on $[0,T_0]$, we define $f^j = f(t_j)$. 
For a sequence $\{f^j\}_{j \geq 0}$, define
\begin{align}
\label{time-quotient}
\pd_t f^{j+1} &= \frac{f^{j+1} - f^j}{\lap t}. 
\end{align}
Suppose that $(\bs{U}^{0}, \bs{Z}^{0}, P^{0})$ is a numerical initial data satisfying \eqref{eq:initial-data-approx} but not necessarily compatible. In the backward Euler scheme, $(\bs{U}^{j+1}, \bs{Z}^{j+1}, P^{j+1})$, the numerical solution at the $(j+1)$-th time step is defined inductively by
\begin{align}
\label{eq:fulldisc1} a_h( \bs{U}^{j+1} , \bs{v} ) - ( P^{j+1}, \div \bs{v}) &= ( \bs{f}^{j+1}, \bs{v} ) , \\
\label{eq:fulldisc2} (\bs{Z}^{j+1}, \bs{w}) + ( P^{j+1}, \div \bs{w}) &= 0 , \\
\label{eq:fulldisc3} ( c_0 \pd_t  P^{j+1}, q ) + ( \div ( \pd_t \bs{U}^{j+1}) , q ) - (\div \bs{Z}^{j+1}, q) &= ( g^{j+1}, q ) ,
\end{align}
for $(\bs{v}, \bs{w}, q) \in \Sigma_h \times V_h \times W_h$ and $j \geq 0$.

In order to prove that the fully discrete solution is well-defined we show that $(\bs{U}^{j+1}, \bs{Z}^{j+1}, P^{j+1})$ is uniquely determined by the linear system (\ref{eq:fulldisc1}--\ref{eq:fulldisc3}) when $\bs{U}^{j}, P^{j}, \bs{f}^{j+1}, g^{j+1}$ are given. Rewriting (\ref{eq:fulldisc1}--\ref{eq:fulldisc3}),
\begin{align*}
a_h( \bs{U}^{j+1} , \bs{v} ) - (P^{j+1}, \div \bs{v}) &= ( \bs{f}^{j+1}, \bs{v} ) , \\
(\bs{Z}^{j+1}, \bs{w}) + (P^{j+1}, \div \bs{w}) &= 0 , \\
( c_0 P^{j+1}, q ) + ( \div \bs{U}^{j+1} , q ) - \lap t (\div \bs{Z}^{j+1}, q) &= ( c_0 P^{j}, q ) + ( \div \bs{U}^{j} , q ) \\
&\qquad + \lap t ( g^{j+1}, q ) , 
\end{align*}
for $(\bs{v}, \bs{w}, q) \in \Sigma_h \times V_h \times W_h$. Regarding $\bs{U}^{j+1}, \bs{Z}^{j+1}, P^{j+1}$ as unknowns, the above is a system of linear equations with the same number of equations and unknowns. Suppose that $\bs{U}^{j} = \bs{Z}^j = P^{j} = \bs{f}^{j+1} = g^{j+1} = 0$ and we want to show that $\bs{U}^{j+1}=\bs{Z}^{j+1}=P^{j+1}=0$. If we take $\bs{v} = \bs{U}^{j+1}$, $\bs{w} = \bs{Z}^{j+1}$, $q = P^{j+1}$ and add all equations together, then we have 
\begin{align*}
0 \leq a_h(\bs{U}^{j+1}, \bs{U}^{j+1}) + (\bs{Z}^{j+1}, \bs{Z}^{j+1}) + (c_0 P^{j+1}, P^{j+1}) = 0,
\end{align*}
which implies $\bs{U}^{j+1} = \bs{Z}^{j+1} = 0$. Note that \eqref{eq:fulldisc1} is now given as $(P^{j+1}, \div \bs{v}) = 0$ for any $\bs{v} \in \Sigma_h$. Then $P^{j+1} = 0$ because $\div \Sigma_h = W_h$. Hence the fully discrete solution is well-defined.



\subsection{Error analysis} Let us split the error $(\bs{u}^j - \bs{U}^j, \bs{z}^j - \bs{Z}^j,  p^j - P^j)$ as
\begin{align}
\label{eq:err-decomp1} \bs{u}^j - \bs{U}^j &= (\bs{u}^j - \Pi_h \bs{u}^j) + (\Pi_h \bs{u}^j - \bs{u}^j) =: e_{\bs{u}}^{I,j} + \theta_{\bs{u}}^j , \\
\label{eq:err-decomp2} \bs{z}^j - \bs{Z}^j &= (\bs{z}^j - \Pi_h^{RT} \bs{z}^j) + (\Pi_h^{RT} \bs{z}^j - \bs{Z}^j) =: e_{\bs{z}}^{I,j} + \tv^j ,\\
\label{eq:err-decomp3} p^j - P^j &= (p^j - Q_h p^j) + (Q_h p^j - P^j) =: e_p^{I,j} + \tp^j .
\end{align}
Applying a similar argument used to obtain (\ref{eq:new-err-eq1}--\ref{eq:new-err-eq3}) yields
\begin{align} 
\label{eq:theta-j-eq1} a_h({\theta}_{\bs{u}}^{j+1}, \bs{v}) - ({\theta}_p^{j+1}, \div \bs{v}) &= -a_h({e}_{\bs{u}}^{I,j+1}, \bs{v}) - E_h( {\utilde{\bs{\sigma}}}^{j+1} , \bs{v} ),  \\
\label{eq:theta-j-eq2} ( \theta_{\bs{z}}^{j+1}, \bs{w}) + ( \theta_p^{j+1}, \div \bs{w}) &= - ( e_{\bs{z}}^{I, j+1}, \bs{w}), \\
\label{eq:theta-j-eq3} (c_0 \pd_t \theta_p^{j+1}, q) + (\div \pd_t \theta_{\bs{u}}^{j+1}, q) - (\div {\theta}_{\bs{z}}^{j+1}, q) &= (\omega_1^{j+1} + \omega_2^{j+1} , q), 
\end{align}
where 
\begin{align*} 
\omega_1^{j+1} &= c_0 (\bar \pd_t p^{j+1} - {\dot p}^{j+1} -\bar \pd_t e_p^{I, j+1}) , \\
\omega_2^{j+1} &= \div (\bar \pd_t \bs{u}^{j+1} - {\dot{\bs{u}}}^{j+1} - \bar \pd_t e_{\bs{u}}^{I,j+1}).
\end{align*}

\begin{thm}
\label{thm:full-error-estimate} Suppose that $(\bs{u}, \bs{z},p)$ is an exact solution of \rm{(\ref{eq:new-strong-eq1}--\ref{eq:new-strong-eq3})} with sufficient regularity and a fully discrete solution $\{(\bs{U}^j, \bs{Z}^j, P^j)\}_{1 \leq j \leq N}$ is defined by {\rm (\ref{eq:fulldisc1}--\ref{eq:fulldisc3})} with initial data satisfying \eqref{eq:initial-data-approx}. Let $M_1$ be the maximum of 
$\| \bs{u} \|_{W^{2,1}H^{1} \cap W^{1,1}H^2} + \| p \|_{W^{2,1}L^2 \cap W^{1,1}H^1}$ and 
$\| \bs{u} \|_{L^\infty H^2} + \| p \|_{W^{1,2}H^2 \cap L^\infty H^1}$ and $M_2$ be the maximum of $\| \bs{u} \|_{W^{1,2}H^2 \cap W^{2,2}H^1} + \| p \|_{W^{1,2}H^1 \cap W^{2,2}L^2}$ and $\| p \|_{W^{1,1}H^2}$. Then 
\begin{align*}
\max_{1 \leq i \leq N} \| \bs{u}^i - \bs{U}^i \|_{1,h} + \max_{1 \leq i \leq N} \| p^i - P^i \|_0 &\leq c(\lap t + h) M_1, \\
\max_{1 \leq i \leq N} \| \bs{z}^i - \bs{Z}^i \|_{0} &\leq c(\lap t + h) M_2,
\end{align*}
with constants $c$ independent of $T_0$. 
\end{thm}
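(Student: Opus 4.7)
The plan is to mirror the semidiscrete argument in Theorem \ref{thm:semidiscrete-error}, replacing integrals in $t$ with $\lap t$-weighted Riemann sums and integration by parts in time with discrete (Abel) summation by parts, so that the consistency errors $\omega_1^{j+1},\omega_2^{j+1}$ contribute the expected $O(\lap t)$ terms and Gr\"onwall's inequality is avoided.

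First I would estimate the displacement and pressure. Define $X_j^2 = \|\tu^j\|_{a,h}^2 + \|\tp^j\|_{c_0}^2$. Test \eqref{eq:theta-j-eq1}--\eqref{eq:theta-j-eq3} with $\bs{v}=\pd_t \tu^{j+1}$, $\bs{w}=\tv^{j+1}$, $q=\tp^{j+1}$, add, and use the elementary identity $a_h(\tu^{j+1},\pd_t\tu^{j+1})\ge \tfrac{1}{2\lap t}(\|\tu^{j+1}\|_{a,h}^2-\|\tu^j\|_{a,h}^2)$ together with its $c_0$-analogue. Multiplying by $\lap t$ and summing from $j=0$ to $\bar m -1$, where $\bar m$ is chosen so that $X_{\bar m} = \max_{0\le i\le N} X_i$, yields
\begin{align*}
X_{\bar m}^2 + 2\lap t \sum_{j=0}^{\bar m -1} \|\tv^{j+1}\|_0^2
&\le X_0^2 + \Phi_1 + \Phi_2 + \Phi_3 + \Phi_4 + \Phi_5,
\end{align*}
where $\Phi_1,\Phi_2,\Phi_4$ correspond to the semidiscrete quantities (with $e^{I,j+1}_{\bs{u}}$, $\utilde{\bs{\sigma}}^{j+1}$, $c_0\bar\pd_t e_p^{I,j+1}$ respectively), $\Phi_3$ is the $(\tv^I,\tv)$ term, and $\Phi_5$ is the new consistency contribution from $\omega_1^{j+1},\omega_2^{j+1}$. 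Then I would apply Lemma \ref{lemma:basic-ineq} with $A = X_{\bar m}$, $B^2 = 2\lap t\sum\|\tv^{j+1}\|_0^2$, exactly as in the semidiscrete case, splitting on whether $\Phi_3$ dominates.

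The main work is in bounding $\Phi_1,\Phi_2,\Phi_4,\Phi_5$. For $\Phi_1,\Phi_2,\Phi_4$, discrete Abel summation gives, for instance,
\begin{align*}
\sum_{j=0}^{\bar m -1}\lap t\, a_h(e_{\bs{u}}^{I,j+1},\pd_t\tu^{j+1}) = a_h(e_{\bs{u}}^{I,\bar m},\tu^{\bar m}) - a_h(e_{\bs{u}}^{I,0},\tu^0) - \sum_{j=0}^{\bar m -1}\lap t\, a_h(\pd_t e_{\bs{u}}^{I,j+1},\tu^j),
\end{align*}
and the right-hand side is controlled by $ch\,\|\bs{u}\|_{W^{1,1}H^2}X_{\bar m}$ using \eqref{eq:sigma-property2} and boundedness of $X_i$ by $X_{\bar m}$; $\Phi_2$ is treated identically via Lemma \ref{lemma:consistency-err-bound}, and $\Phi_4$ via Cauchy--Schwarz in the $c_0$ inner product. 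For $\Phi_5$, the Taylor remainder bound $\|\bar\pd_t\phi^{j+1}-\dot\phi^{j+1}\|_0 \le \lap t^{-1}\int_{t_j}^{t_{j+1}}(s-t_j)\|\ddot\phi(s)\|_0\,ds$ yields $\lap t\sum_j\|\bar\pd_t\phi^{j+1}-\dot\phi^{j+1}\|_0 \lesssim \lap t\,\|\ddot\phi\|_{L^1 L^2}$, so the $\omega_1,\omega_2$ terms contribute $c(\lap t + h)M_1 \cdot X_{\bar m}$ after another Cauchy--Schwarz. Combining these with \eqref{eq:initial-data-approx}, Lemma \ref{lemma:basic-ineq} gives the bound on $\max_i\|\bs{u}^i-\bs{U}^i\|_{1,h}$; the bound on $\max_i\|p^i-P^i\|_0$ then follows by picking $\bs{v}$ via the inf-sup condition \eqref{eq:sigma-property3} in \eqref{eq:theta-j-eq1} at time $t_i$, exactly as in the semidiscrete case.

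For the flux estimate, I would take the discrete time-difference of \eqref{eq:theta-j-eq1}--\eqref{eq:theta-j-eq2} and test with $\pd_t \tu^{j+1}$ and $\tv^{j+1}$, combine with \eqref{eq:theta-j-eq3} tested by $\pd_t\tp^{j+1}$, and sum to obtain
\begin{align*}
\|\tv^{\bar m}\|_0^2 + \lap t\sum_{j=0}^{\bar m -1}\{\|\pd_t\tu^{j+1}\|_{a,h}^2 + \|\pd_t\tp^{j+1}\|_{c_0}^2\} \le \|\tv^0\|_0^2 + \Psi_1+\Psi_2+\Psi_3+\Psi_4+\Psi_5,
\end{align*}
with $\bar m$ the maximizer of $\|\tv^i\|_0$, and again apply the two-case split: if the $(\pd_t e_{\bs{z}}^{I},\tv)$ term dominates, divide by $\|\tv^{\bar m}\|_0$ and get the $W^{1,1}H^1$-control of $\bs{z}$; otherwise use Cauchy--Schwarz/Young to absorb the $\pd_t$ norms into the left-hand side, leaving $W^{1,2}$ norms that match $M_2$.

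The main obstacle is the discrete summation-by-parts bookkeeping for $\Phi_1$ and $\Phi_2$: one must verify that the shifted index $\tu^j$ (instead of $\tu^{j+1}$) appearing in the Abel identity is still uniformly controlled by $X_{\bar m}$, and that the boundary term $a_h(e_{\bs{u}}^{I,\bar m},\tu^{\bar m})$ at the unknown maximizing index $\bar m$ can be absorbed into $c h\,(\cdot)\,X_{\bar m}$ in the form required by Lemma \ref{lemma:basic-ineq}. Once this is handled, the splitting into two cases proceeds exactly as in the semidiscrete proof and the exponential-in-$T_0$ factors never appear.
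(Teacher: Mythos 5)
Your skeleton coincides with the paper's: the same energy test functions (yours differ only by the factor $\lap t$), the same quantities $X_i$, $Y_i$, the same maximizing index and two-case split via Lemma \ref{lemma:basic-ineq}, Abel summation by parts for the $\Phi_1,\Phi_2$ terms, and Taylor-remainder bounds giving the $O(\lap t)$ contribution of $\omega_1^{j+1},\omega_2^{j+1}$. The bookkeeping issue you flag at the end (controlling the shifted index $\tu^{j-1}$ and the boundary term at $\bar m$ by $X_{\bar m}$) is not a real obstacle and is handled in the paper exactly as you suggest.

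The genuine gap is elsewhere, at the one point where the fully discrete argument needs an idea beyond the semidiscrete one. In \eqref{eq:theta-j-eq3} the consistency terms are paired with $\tp^{j+1}$ (and, in the flux estimate, with $\pd_t \tp^{j+1}$) in the \emph{unweighted} $L^2$ inner product, while your energy only controls $\| \tp \|_{c_0}$. Your claim that the $\omega_1,\omega_2$ contribution is $\leq c(\lap t + h) M_1 X_{\bar m}$ ``after another Cauchy--Schwarz'' implicitly uses $\| \tp^{j} \|_0 \lesssim X_{\bar m}$, and this fails precisely in the regime the theorem targets: $\omega_2^{j+1} = \div ( \bar \pd_t \bs{u}^{j+1} - \dot{\bs{u}}^{j+1} - \bar\pd_t e_{\bs{u}}^{I,j+1})$ carries no factor of $c_0$, so nothing controls $(\omega_2^{j+1}, \tp^{j+1})$ when $c_0$ is small or zero. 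The same problem recurs in your $\bs{z}$-estimate: the left-hand side there contains $\| \pd_t \tp^{j+1} \|_{c_0}^2$, not $\| \pd_t \tp^{j+1} \|_0^2$, so the term $(\omega_1^{j+1}+\omega_2^{j+1}, \pd_t \tp^{j+1})$ cannot be absorbed by Young's inequality once $c_0$ degenerates. The paper closes this with the auxiliary inf-sup bounds \eqref{eq:thetap-estm1}--\eqref{eq:thetap-estm2}, namely $\| \tp^j \|_0 \lesssim \| \tu^j \|_{1,h} + \| e_{\bs{u}}^{I,j} \|_{1,h} + h \| \utilde{\bs{\sigma}}^j \|_1$ and its analogue for $\pd_t \tp^j$, obtained by choosing $\bs{v}$ from the inf-sup condition in \eqref{eq:theta-j-eq1}: the first turns $\sum_j \lap t (\omega_1^j+\omega_2^j, \tp^j)$ into the $CX_{\bar i} + D$ structure required by Lemma \ref{lemma:basic-ineq}, and the second lets the $(\omega, \pd_t\tp)$ term be absorbed into $\| \pd_t \tu \|_{a,h}^2$ rather than into the degenerate $c_0$-term. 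You do use the inf-sup condition, but only a posteriori for $\max_i \| \theta_p^i \|_0$; it must also be invoked \emph{inside} both energy estimates. As written, your argument proves the theorem only under the additional hypothesis that $c_0$ is uniformly positive, which is exactly the assumption the paper is designed to remove.
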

We already have error bounds of $\| e_{\bs{u}}^{I,j}\|_{1,h}$, $\| e_{\bs{z}}^{I,j} \|_0$, and $\| e_p^{I,j} \|_0$ in \eqref{eq:interp-approx}. Thus, by the triangle inequality, we only need to estimate $\| \theta_{\bs{u}}^j \|_{1,h}$, $\| \theta_{\bs{z}}^j \|_0$, and $\| \theta_p^j \|_0$ for the proof of Theorem \ref{thm:full-error-estimate}, therefore we will devote the rest of this section to prove these estimates.

\begin{lemma}
For $\theta_{\bs{u}}^j$, $\theta_p^j$ defined in \eqref{eq:err-decomp1} and \eqref{eq:err-decomp3}, the following hold.
\begin{align}
\label{eq:thetap-estm1} \| \theta_p^j \|_0 &\lesssim \| \theta_{\bs{u}}^j \|_{1,h} + \| e_{\bs{u}}^{I, j} \|_{1,h} + h\| \utilde{\bs{\sigma}}^j \|_1, \\
\label{eq:thetap-estm2} \| \pd_t \theta_p^j \|_0 &\lesssim \| \pd_t \theta_{\bs{u}}^j \|_{1,h} + \| \pd_t e_{\bs{u}}^{I, j} \|_{1,h} + h\| \pd_t \utilde{\bs{\sigma}}^j \|_1. 
\end{align}
\end{lemma}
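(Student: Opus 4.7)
The plan is to use the inf--sup condition \eqref{eq:sigma-property3} together with the consistency bound from Lemma \ref{lemma:consistency-err-bound} applied to equation \eqref{eq:theta-j-eq1}, which at this stage is purely an algebraic (stationary-type) relation among $\theta_{\bs{u}}^{j+1}$, $\theta_p^{j+1}$, $e_{\bs{u}}^{I,j+1}$ and $\utilde{\bs{\sigma}}^{j+1}$.

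For \eqref{eq:thetap-estm1} I would first invoke \eqref{eq:sigma-property3} to pick, for the given $\theta_p^{j+1} \in W_h$, a test function $\bs{v} \in \Sigma_h$ satisfying $(\theta_p^{j+1}, \div \bs{v}) \gtrsim \|\theta_p^{j+1}\|_0^2$ together with $\|\bs{v}\|_{1,h} \lesssim \|\theta_p^{j+1}\|_0$. Substituting this $\bs{v}$ into \eqref{eq:theta-j-eq1} and rearranging gives
\begin{align*}
\|\theta_p^{j+1}\|_0^2 \lesssim -(\theta_p^{j+1}, \div \bs{v}) + \ldots = -a_h(\theta_{\bs{u}}^{j+1}, \bs{v}) - a_h(e_{\bs{u}}^{I,j+1}, \bs{v}) - E_h(\utilde{\bs{\sigma}}^{j+1}, \bs{v}).
\end{align*}
Estimating the first two terms by the Cauchy--Schwarz inequality in the $a_h$-norm (equivalent to $\|\cdot\|_{1,h}$ by Lemma \ref{lemma:korn}) and the last term by Lemma \ref{lemma:consistency-err-bound}, each factor carries a $\|\bs{v}\|_{1,h}$ that I cancel against $\|\theta_p^{j+1}\|_0$ on the right. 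Dividing through by $\|\theta_p^{j+1}\|_0$ yields the claimed bound (with $j$ replaced by $j{+}1$, which is the same statement after reindexing).

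For \eqref{eq:thetap-estm2} the observation is that \eqref{eq:theta-j-eq1} is linear and holds at every time level, so applying the discrete difference quotient $\pd_t$ in time produces the analogous identity
\begin{align*}
a_h(\pd_t \theta_{\bs{u}}^{j+1}, \bs{v}) - (\pd_t \theta_p^{j+1}, \div \bs{v}) = -a_h(\pd_t e_{\bs{u}}^{I,j+1}, \bs{v}) - E_h(\pd_t \utilde{\bs{\sigma}}^{j+1}, \bs{v}),
\end{align*}
because $E_h$ and $a_h$ are linear in their second (matrix/vector) argument. I then repeat the inf--sup argument verbatim with $\pd_t \theta_p^{j+1}$ in place of $\theta_p^{j+1}$ to obtain \eqref{eq:thetap-estm2}.

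I do not expect a substantial obstacle here: the result is really a consequence of inf--sup stability of $(\Sigma_h, W_h)$ combined with the already proven consistency estimate. The only minor point to keep in mind is that the bound on $E_h$ in Lemma \ref{lemma:consistency-err-bound} assumes $\utilde{\bs{\sigma}}^j \in H^1$, which is guaranteed by the standing regularity assumption on the exact solution, so the $h\|\utilde{\bs{\sigma}}^j\|_1$ term is legitimate; and the time-difference argument requires that the discrete difference quotient commutes with all spatial operators appearing in \eqref{eq:theta-j-eq1}, which is immediate since $\pd_t$ acts only on the time index.
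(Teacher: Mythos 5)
Your proposal is correct and follows essentially the same route as the paper: test \eqref{eq:theta-j-eq1} with the inf--sup-supplied $\bs{v}\in\Sigma_h$ (the paper nominally cites \eqref{eq:inf-sup1} with $\bs{v}\in V_h$, but since the bound involves $\|\bs{v}\|_{1,h}$ and $a_h$, it is \eqref{eq:sigma-property3} that is meant, exactly as you use), bound the right-hand side via Cauchy--Schwarz and Lemma \ref{lemma:consistency-err-bound}, and divide by $\|\theta_p^{j}\|_0$. Your treatment of \eqref{eq:thetap-estm2} by applying the difference quotient to the linear identity is precisely the paper's ``difference of \eqref{eq:theta-j-eq1} at $t_j$ and $t_{j-1}$'' argument.
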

\begin{proof}
By the inf-sup condition \eqref{eq:inf-sup1} there exists a $\bs{v} \in V_h$ such that 
\begin{align*}
(\theta_p^j, \div \bs{v}) = \| \theta_p^j \|_0^2, \qquad \| \bs{v} \|_{1,h} \lesssim \| \theta_p^j \|_0.
\end{align*}
Using this $\bs{v}$ in \eqref{eq:theta-j-eq1}, we have 
\begin{align*}
\| \theta_p^j \|_0^2 &= a_h( \theta_{\bs{u}}^j +  e_{\bs{u}}^{I,j}, \bs{v}) + E_h(  \utilde{\bs{\sigma}}^j , \bs{v} ) \\
&\lesssim (\|  \theta_{\bs{u}}^j \|_{1,h} + \| e_{\bs{u}}^{I,j} \|_{1,h} + h \|  \utilde{\bs{\sigma}}^j \|_1 ) \| \bs{v} \|_{1,h} \\
&\lesssim (\|  \theta_{\bs{u}}^j \|_{1,h} + \| e_{\bs{u}}^{I,j} \|_{1,h} + h \|  \utilde{\bs{\sigma}}^j \|_1 ) \|  \theta_p^j \|_0, 
\end{align*}
so the inequality \eqref{eq:thetap-estm1} follows. 

The inequality \eqref{eq:thetap-estm2} follows by taking the difference of \eqref{eq:theta-j-eq1} at $t = t_{j}$ and at $t = t_{j-1}$, and applying a similar argument. 
\end{proof}

Now Theorem \ref{thm:full-error-estimate} is an immediate consequence of the following and the triangle inequality.
\begin{thm} \label{lemma:full-theta-thm} Suppose that $(\bs{u}, \bs{z},p)$ is an exact solution of {\rm{(\ref{eq:new-strong-eq1}--\ref{eq:new-strong-eq3})}} with sufficient regularity and $\theta_{\bs{u}}^j$, $\theta_{\bs{z}}^j$, $\theta_p^j$ are defined as in {\rm{(\ref{eq:err-decomp1}--\ref{eq:err-decomp3})}} for a fully discrete solution $\{(\bs{U}^j, \bs{Z}^j, P^j)\}_{1\leq j \leq N}$ with initial data satisfying \eqref{eq:initial-data-approx}. Suppose also that $M_1$ and $M_2$ are defined as in Theorem \ref{thm:full-error-estimate}. Then 
\begin{align}
\label{eq:full-theta-estm1} \max_{1 \leq i \leq N} \| \theta_{\bs{u}}^i \|_{1,h} + \max_{1 \leq i \leq N} \| \theta_p^{i} \|_0 &\leq c(\lap t + h) M_1, \\
\notag \max_{1 \leq i \leq N} \| \theta_{\bs{z}}^i |_{0} &\leq c(\lap t + h) M_2,
\end{align}
with constants $c$ independent of $T_0$. 
\end{thm}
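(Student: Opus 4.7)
The plan is to reproduce the energy argument of Theorem \ref{thm:semidiscrete-error} at the fully discrete level: each integration by parts in $t$ is replaced by Abel summation in $j$, and the extra residuals $\omega_1^{j+1}, \omega_2^{j+1}$ are absorbed via the standard Taylor bound $\|\pd_t f^{j+1} - \dot f^{j+1}\|_X \leq \int_{t_j}^{t_{j+1}} \|\ddot f\|_X\,ds$. Since Lemma 4.3 already converts any bound on $\theta_{\bs{u}}^j$ into one on $\theta_p^j$, it is enough to control $\max_j \|\theta_{\bs{u}}^j\|_{1,h}$ and $\max_j \|\theta_{\bs{z}}^j\|_0$ separately. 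Throughout, the goal is to avoid Gr\"onwall's inequality by invoking Lemma \ref{lemma:basic-ineq}, as in the semidiscrete proof.

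\textbf{Displacement and pressure estimate.} In (\ref{eq:theta-j-eq1})--(\ref{eq:theta-j-eq3}) I would test with $\bs{v} = \pd_t \theta_{\bs{u}}^{j+1}$, $\bs{w} = \theta_{\bs{z}}^{j+1}$, $q = \theta_p^{j+1}$, add, apply $(\pd_t a^{j+1}, a^{j+1}) \geq \tfrac{1}{2}\pd_t \|a^{j+1}\|^2$ in both the $a_h$- and $c_0$-seminorms, multiply by $\lap t$, and sum $j = 0, \ldots, n-1$. Writing $(X^n)^2 := \|\theta_{\bs{u}}^n\|_{a,h}^2 + \|\theta_p^n\|_{c_0}^2$ and choosing $\bar n$ so that $X^{\bar n} = \max_n X^n$ yields
\begin{align*}
(X^{\bar n})^2 + 2\sum_{j=1}^{\bar n}\|\theta_{\bs{z}}^j\|_0^2 \lap t \leq (X^0)^2 + \wt\Phi_1 + \wt\Phi_2 + \wt\Phi_3 + \wt\Phi_4,
\end{align*}
where $\wt\Phi_i$ is the discrete $j$-sum corresponding to $\Phi_i$ in the semidiscrete argument. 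On $\wt\Phi_1$ and $\wt\Phi_2$ Abel summation transfers $\pd_t$ off of $\theta_{\bs{u}}^A$ onto $e_{\bs{u}}^I$ and $\utilde{\bs{\sigma}}$, producing boundary terms at $j=0,\bar n$ and a telescoping $j$-sum, all bounded by $hM_1\cdot X^{\bar n}$ exactly as in \eqref{eq:semi-Phi1-estm}--\eqref{eq:semi-Phi2-estm}. The term $\wt\Phi_3$ is bounded as in Case I using Young's inequality and absorbed into the $\|\theta_{\bs{z}}^j\|_0^2$ sum on the left. The term $\wt\Phi_4$ contains the $c_0$-piece together with the residuals $\omega_1^j, \omega_2^j$; the $O(\lap t)$ Taylor estimate above, combined with \eqref{eq:thetap-estm1} to trade $\|\theta_p^j\|_0$ for $X^{\bar n}$ plus data, gives $\wt\Phi_4 \lesssim (\lap t + h) M_1 \cdot X^{\bar n}$. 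Applying Lemma \ref{lemma:basic-ineq} with $A = X^{\bar n}$ then yields $X^{\bar n} \lesssim (\lap t + h) M_1$, and the triangle inequality together with \eqref{eq:interp-approx} and Lemma 4.3 closes the first line of \eqref{eq:full-theta-estm1}.

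\textbf{Flux estimate and main obstacle.} For $\max_n \|\theta_{\bs{z}}^n\|_0$ I would apply the discrete difference $\pd_t$ to (\ref{eq:theta-j-eq1}) and (\ref{eq:theta-j-eq2}), test these with $\bs{v}=\pd_t \theta_{\bs{u}}^{j+1}$ and $\bs{w}=\theta_{\bs{z}}^{j+1}$, and test (\ref{eq:theta-j-eq3}) with $q=\pd_t \theta_p^{j+1}$. Adding, using $(\pd_t \theta_{\bs{z}}^{j+1},\theta_{\bs{z}}^{j+1})\geq \tfrac12\pd_t\|\theta_{\bs{z}}^{j+1}\|_0^2$, summing in $j$, and running the same dichotomy as in the semidiscrete $\bs z$-argument gives $\max_n\|\theta_{\bs{z}}^n\|_0 \lesssim (\lap t + h) M_2$; Lemma \ref{lemma:basic-ineq} and \eqref{eq:thetap-estm2} handle the Young-inequality balancing for $\pd_t \theta_p^j$ via $\pd_t \theta_{\bs{u}}^j$. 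The main technical obstacle is the control of the doubly differenced residuals $\pd_t \omega_1^j$ and $\pd_t \omega_2^j$: a Taylor expansion with integral remainder yields $\|\pd_t(\pd_t f^{j+1}-\dot f^{j+1})\|_X \lesssim (\lap t)^{-1}\int_{t_{j-1}}^{t_{j+1}} \|\ddot f\|_X\,ds$, and weighting by $\lap t$ in the $j$-sum reproduces an $L^2$-in-time norm of $\ddot f$, precisely the $W^{2,2}$-in-time pieces encoded in $M_2$. Once this bookkeeping is in place, the rest is a routine discretization of the semidiscrete proof and introduces no Gr\"onwall factor, which is the point of the theorem.
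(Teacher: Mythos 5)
Your proposal is correct and follows essentially the same route as the paper's proof: the same test choices ($\bs{v}=\theta_{\bs{u}}^{j+1}-\theta_{\bs{u}}^{j}$, i.e.\ $\lap t\,\pd_t\theta_{\bs{u}}^{j+1}$, with $\lap t\,\theta_{\bs{z}}^{j+1}$, $\lap t\,\theta_p^{j+1}$, and for the flux the time-differenced first two equations tested with $\pd_t\theta_{\bs{u}}^{j+1}$, $\theta_{\bs{z}}^{j+1}$ and \eqref{eq:theta-j-eq3} with $\pd_t\theta_p^{j+1}$), Abel summation in place of integration by parts in time, the dichotomy plus Lemma \ref{lemma:basic-ineq} instead of Gr\"onwall, the inf-sup estimates \eqref{eq:thetap-estm1}--\eqref{eq:thetap-estm2} to trade $\theta_p$ for $\theta_{\bs{u}}$, and Taylor bounds for the consistency residuals. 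One small correction: since \eqref{eq:theta-j-eq3} is not time-differenced in your scheme, the doubly differenced residuals $\pd_t\omega_1^j,\pd_t\omega_2^j$ never appear --- what is actually required is the $\lap t$-weighted $\ell^2$ bound on $\omega_1^j,\omega_2^j$ themselves (the paper's \eqref{eq:D-estm3}--\eqref{eq:D-estm4}), which your Taylor-with-integral-remainder argument already yields, so your ``main obstacle'' is in fact easier than stated.
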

\begin{proof}
Taking $\bs{v} = \theta_{\bs{u}}^{j+1} - \theta_{\bs{u}}^j$, $\bs{w} = \lap t \theta_{\bs{z}}^{j+1}$, $q = \lap t \theta_p^{j+1}$ in (\ref{eq:theta-j-eq1}--\ref{eq:theta-j-eq3}), and adding these equations together yield
\begin{align} \label{eq:theta-j-eq4}
& \| \theta_{\bs{u}}^{j+1} \|_{a,h}^2 + \| \theta_p^{j+1} \|_{c_0}^2 + \lap t \| \theta_{\bs{z}}^{j+1} \|_0^2 - a_h(\theta_{\bs{u}}^{j+1}, \theta_{\bs{u}}^{j}) - (c_0 \theta_p^{j+1}, \theta_p^j) \\
\notag &= - a_h(e_{\bs{u}}^{I,j+1}, \theta_{\bs{u}}^{j+1} - \theta_{\bs{u}}^j) - E_h(\utilde{\bs{\sigma}}^{j+1}, \theta_{\bs{u}}^{j+1} - \theta_{\bs{u}}^j) - \lap t (e_{\bs{z}}^{I,j+1}, \theta_{\bs{z}}^{j+1})  \\
\notag &\quad + \lap t (\omega_1^{j+1} + \omega_2^{j+1}, \theta_p^{j+1}) \\
\notag &=: \Phi_1^{j+1} + \Phi_2^{j+1} + \Phi_3^{j+1} + \Phi_4^{j+1}. 
\end{align}
By the Cauchy--Schwarz and the arithmetic-geometric mean inequalities, 
\begin{align*}
a_h(\theta_{\bs{u}}^{j+1}, \theta_{\bs{u}}^{j}) + (c_0 \theta_p^{j+1}, \theta_p^j) \leq \half ( \| \theta_{\bs{u}}^{j+1} \|_{a,h}^2 + \| \theta_{\bs{u}}^j \|_{a.h}^2 + \| \theta_p^{j+1} \|_{c_0}^2 + \| \theta_p^j \|_{c_0}^2). 
\end{align*}
Applying this to \eqref{eq:theta-j-eq4}, after some algebraic manipulations, we have 
\begin{multline*}
\half (\| \theta_{\bs{u}}^{j+1} \|_{a,h}^2 + \| \theta_p^{j+1} \|_{c_0}^2 ) + \lap t \| \theta_{\bs{z}}^{j+1} \|_0^2 \\
\leq \half ( \| \theta_{\bs{u}}^j \|_{a,h}^2 + \| \theta_p^j \|_{c_0}^2) + \Phi_1^{j+1} + \Phi_2^{j+1} + \Phi_3^{j+1} + \Phi_4^{j+1}. 
\end{multline*}
As in the semidiscrete error analysis, the proof consists of three parts dealing with $\theta_{\bs{u}}^j$, $\theta_{\bs{z}}^j$, $\theta_{p}^j$, respectively.

\noindent {\bf Estimate of $\max_{1\leq i \leq N} \| \theta_{\bs{u}}^i \|_{1,h}$} : Defining 
\begin{align}
X_i^2 := \half( \| \theta_{\bs{u}}^i \|_{a,h}^2 + \| \theta_p^i \|_{c_0}^2), \qquad Y_i^2 := \lap t \sum_{j=1}^{i} \| \theta_{\bs{z}}^{j} \|_0^2,
\end{align}
and by taking the summation of \eqref{eq:theta-j-eq4} over $1 \leq j \leq i $, we have 
\begin{align*}
X_{i}^2 + Y_{i}^2 =  X_0^2 + \sum_{j=1}^{i} \left(  \Phi_1^{j} + \Phi_2^{j} + \Phi_3^{j} + \Phi_4^{j} \right),
\end{align*}
for all $1 \leq i \leq N$. Let us define $\bar i$ to be $X_{\bar i} = \max_{1 \leq j \leq N} X_j$. By comparing $\sum_{j=1}^{\bar i} (  \Phi_1^{j} + \Phi_2^{j} + \Phi_4^{j} )$ and $\sum_{j=1}^{\bar i} \Phi_3^{j}$,
we have
\begin{align}
\label{eq:full-estm-case1} &\text{either}& X_{\bar i}^2 + Y_{\bar i}^2 &\leq X_0^2 + 2\sum_{j=1}^{\bar i} \Phi_3^{j}, \\
\label{eq:full-estm-case2} &\text{or} &X_{\bar i}^2 + Y_{\bar i}^2 &\leq  X_0^2 + 2\sum_{j=1}^{\bar i} \left(  \Phi_1^{j} + \Phi_2^{j} + \Phi_4^{j} \right). 
\end{align}
{\it Case I} : Suppose that \eqref{eq:full-estm-case1} holds. The Cauchy--Schwarz and the arithmetic-geometric mean inequalities give
\begin{align*}
\sum_{j=1}^{\bar i} \Phi_3^{j} \leq \frac{\lap t}{2} \sum_{j=1}^{\bar i} \| e_{\bs{z}}^{I,j} \|_0^2 + \frac{\lap t}{2} \sum_{j=1}^{\bar i} \| \theta_{\bs{z}}^{j} \|_0^2 = \frac{\lap t}{2} \sum_{j=1}^{\bar i} \| e_{\bs{z}}^{I,j} \|_0^2  + \half Y_{\bar i}^2.
\end{align*}
Applying this to \eqref{eq:full-estm-case1} yields
\begin{align*} 
X_{\bar i}^2 &\leq X_0^2 + {\lap t} \sum_{j=0}^{\bar i - 1} \| {e}_{\bs{z}}^{I,j+1} \|_0^2 \\
\notag &\lesssim X_0^2 + \int_0^{\bar i \lap t} \| \dot e_{\bs{z}}^I (s)\|^2 ds \\
\notag &\lesssim h^2(\| \bs{u}(0) \|_2^2 + \| p(0) \|_1^2 + \| p \|_{W^{1,2} H^2}^2). 
\end{align*}
{\it Case II} : Suppose that \eqref{eq:full-estm-case2} holds. We remark a summation by parts identity
\begin{align*} 
- \sum_{j=1}^{\bar i} F^{j}(G^{j} - G^{j-1}) = - F^{\bar i} G^{\bar i} + F^0 G^0 +  \sum_{j=1}^{\bar i} (F^{j} - F^{j-1}) G^{j-1}  . 
\end{align*}
Using this, we get 
\begin{align} \label{eq:Phi1-sum-modified}
\sum_{j=1}^{\bar i} \Phi_1^{j} &= -  \sum_{j=1}^{\bar i} a_h( e_{\bs{u}}^{I,j},\theta_{\bs{u}}^{j} - \theta_{\bs{u}}^{j-1}) \\
\notag &= - a_h(e_{\bs{u}}^{I,\bar i}, \theta_{\bs{u}}^{\bar i}) + a_h(e_{\bs{u}}^{I, 0}, \theta_{\bs{u}}^0))
+ \sum_{j=1}^{\bar i} a_h(e_{\bs{u}}^{I,j} - e_{\bs{u}}^{I,j-1}, \theta_{\bs{u}}^{j-1}) . 
\end{align}
Considering the identity
\begin{align*}
e_{\bs{u}}^{I,j} - e_{\bs{u}}^{I,j-1} = \int_{t_{j-1}}^{t_j} \dot e_{\bs{u}}^I (s) ds, 
\end{align*}
and applying the Cauchy--Schwarz inequality to the last form in \eqref{eq:Phi1-sum-modified}, 
\begin{align*}
\sum_{j=1}^{\bar i} \Phi_1^{j} &\lesssim \| e_{\bs{u}}^{I, \bar i} \|_{1,h} \| \theta_{\bs{u}}^{\bar i} \|_{1,h} + \| e_{\bs{u}}^{I, 0} \|_{1,h} \| \theta_{\bs{u}}^{0} \|_{1,h} \\
& \quad + \sum_{j=1}^{\bar i} \left( \int_{t_{j-1}}^{t_j} \| \dot e_{\bs{u}}^I (s) \|_{1,h} ds \right) \| \theta_{\bs{u}}^{j-1} \|_{1,h}.
\end{align*}
By the definition of $X_{\bar i}$ and the interpolation error estimates we have 
\begin{align} \label{eq:Phi1-sum-modified2}
\sum_{j=1}^{\bar i} \Phi_1^{j} \leq C_0 X_{\bar i}, 
\end{align}
with $C_0 = ch( \| \bs{u} (t_{\bar i}) \|_2 + \| \bs{u} (0) \|_2 + \| \bs{u} \|_{W^{1,1}([0, t_{\bar i}]; H^2)})$. Similarly, we can have 
\begin{align}  \label{eq:Phi2-sum-modified}
\sum_{j=1}^{\bar i} \Phi_2^{j} \leq C_1 X_{\bar i},
\end{align}
with $C_1 = ch(\| \utilde{\bs{\sigma}}(0) \|_1 + \| \utilde{\bs{\sigma}}(t_{\bar i}) \|_1 + \| \utilde{\bs{\sigma}} \|_{W^{1,1}([0, t_{\bar i}]; H^1)})$. 
By the definition of $\Phi_4^{j}$ in \eqref{eq:theta-j-eq4}, and \eqref{eq:thetap-estm1}, one can see that 
\begin{align} \label{eq:Phi4-modified}
\Phi_4^{j} &\leq \lap t \| \omega_1^{j} + \omega_2^{j} \|_{0} ( \| \theta_{\bs{u}}^j \|_{1,h} +  \| e_{\bs{u}}^{I,j}\|_{1,h} + h\| \utilde{\bs{\sigma}}^{j} \|_1) \\
\notag &\leq c \lap t \| \omega_1^{j} + \omega_2^{j} \|_0 X_{\bar i} + h \lap t \| \omega_1^{j} + \omega_2^{j} \|_0(\| \bs{u} \|_{L^\infty H^2} + \| \utilde{\bs{\sigma}} \|_{L^\infty H^1}). 
\end{align}
for $1 \leq j \leq \bar i$. Using \eqref{eq:Phi1-sum-modified2}, \eqref{eq:Phi2-sum-modified}, \eqref{eq:Phi4-modified} to \eqref{eq:full-estm-case2} yields 
\begin{align*}
X_{\bar i}^2 + Y_{\bar i}^2 &\leq \left( 2C_0 + 2C_1 + c \lap t \sum_{j=1}^{\bar i} \| \omega_1^{j} + \omega_2^{j} \|_0 \right) X_{\bar i} \\
\notag &\quad + X_0^2 + 2 h \lap t \sum_{j=1}^{\bar i} \| \omega_1^j + \omega_2^j \|_0 (\| \bs{u} \|_{L^\infty H^2} + \| \utilde{\bs{\sigma}} \|_{L^\infty H^1}) \\
\notag &=: C X_{\bar i} + D.
\end{align*}
By Lemma \ref{lemma:basic-ineq},
$X_{\bar i} + Y_{\bar i} \lesssim \max \{4 C, 2 \sqrt{D} \}$, and
we need to show that 
\begin{align*}
C \lesssim \lap t + h, \qquad D \lesssim (\lap t + h)^2.
\end{align*} 
Since we already know that $C_0 + C_1 + X_0 \lesssim h(\| \bs{u} \|_{W^{1,1} H^2} + \| p \|_{W^{1,1}H^1})$, these inequalities are proven if we show  
\begin{multline} \label{eq:omega23-estm}
\lap t \sum_{j=1}^{\bar i} \| \omega_1^j + \omega_2^j \|_0 \\
\lesssim (h + \lap t)( \| \bs{u} \|_{W^{2,1}H^1} + \| \bs{u} \|_{W^{1,1}H^2} + \| p \|_{W^{2,1}L^2} + \| p \|_{W^{1,1}H^1}),
\end{multline}
which then completes the proof of \eqref{eq:full-theta-estm1} for $\| \theta_{\bs{u}}^i \|_{1,h}$. 

To show \eqref{eq:omega23-estm}, by the definition of $\omega_1^j$ and Taylor expansion, we first have  
\begin{align*}
\lap t \| \omega_1^j \|_0 &\lesssim \lap t (\| \pd_t p^j - \dot p^j \|_0 + \| \pd_t e_p^{I,j} \|_0) \\
&\leq  \left \| \int_{t_{j-1}}^{t_j} (\dot p(s) - (\lap t) \dot p(t_{j})) ds \right \|_0 + \left \| \int_{t_{j-1}}^{t_j} \dot e_p^I(s)  ds \right \|_0 \\
&\leq \lap t \int_{t_{j-1}}^{t_j} \| \ddot p (s) \|_0 ds + h \int_{t_{j-1}}^{t_j} \| \dot p (s) \|_1 ds.
\end{align*}
A similar argument yields 
\begin{align*}
\lap t \| \omega_2^j \|_0 \leq \lap t \int_{t_{j-1}}^{t_j} \| \ddot{\bs{u}}(s) \|_1 ds + h \int_{t_{j-1}}^{t_j} \| \dot{\bs{u}} (s) \|_2 ds.
\end{align*}
Then \eqref{eq:omega23-estm} follows by the triangle inequality and taking the summation of the above inequalities for $\lap t \| \omega_1^j \|_0$ and $\lap t \| \omega_2^j \|_0$ over $1 \leq j \leq \bar i$.

\noindent {\bf Estimate of $\max_{1\leq i \leq N} \| \theta_{p}^i \|_0$} : By \eqref{eq:thetap-estm1} and the estimate of $\| \theta_{\bs{u}}^j \|_{1,h}$, 
\begin{align*}
\max_{1\leq i\leq N} \| \theta_p^i \|_0 &\lesssim \max_{1\leq i\leq N} (\| \theta_{\bs{u}}^i \|_{1,h} + \| e_{\bs{u}}^{I, i} \|_{1,h} + h\| \utilde{\bs{\sigma}}^i \|_1) \\
&\lesssim \max_{1\leq i\leq N} \| \theta_{\bs{u}}^i \|_{1,h} + h \| \bs{u}, \utilde{\bs{\sigma}} \|_{L^\infty H^1}.
\end{align*}
The conclusion follows from the estimate of $\| \theta_{\bs{u}}^i \|_{1,h}$. 

\noindent {\bf Estimate of $\max_{1\leq i \leq N} \| \theta_{\bs{z}}^i \|_0$} : 
The time steppings of \eqref{eq:theta-j-eq1} and \eqref{eq:theta-j-eq2} as in \eqref{time-quotient} give
\begin{align*} 
a_h(\pd_t {\theta}_{\bs{u}}^{j+1}, \bs{v}) - (\pd_t {\theta}_p^{j+1}, \div \bs{v}) &= -a_h(\pd_t {e}_{\bs{u}}^{I,j+1}, \bs{v}) - E_h( \pd_t {\utilde{\bs{\sigma}}}^{j+1} , \bs{v} ),  \\
( \pd_t \theta_{\bs{z}}^{j+1}, \bs{w}) + ( \pd_t \theta_p^{j+1}, \div \bs{w}) &= - ( \pd_t e_{\bs{z}}^{I, j+1}, \bs{w}).
\end{align*}
Taking $\bs{v} = \pd_t \theta_{\bs{u}}^{j+1}$, $w = \theta_{\bs{z}}^{j+1}$ in the above, $q = \pd_t \theta_p^{j+1}$ in \eqref{eq:theta-j-eq3}, and adding these three equations together, we have
\begin{multline*}
\| \pd_t \theta_{\bs{u}}^{j+1} \|_{a,h}^2 + (\pd_t \theta_{\bs{z}}^{j+1}, \theta_{\bs{z}}^{j+1}) + \| \pd_t \theta_p^{j+1} \|_{c_0}^2 \\
= -a_h( \pd_t e_{\bs{u}}^{I,j+1}, \pd_t \theta_{\bs{u}}^{j+1}) - E_h(\pd_t \utilde{\bs{\sigma}}^{j+1}, \pd_t \theta_{\bs{u}}^{j+1}) \\
- (\pd_t e_{\bs{z}}^{I,j+1}, \theta_{\bs{z}}^{j+1}) + (\omega_1^{j+1} + \omega_2^{j+1}, \pd_t \theta_p^{j+1}). 
\end{multline*}
Observe that $\| \pd_t \theta_p^{j+1} \|_0$ can be estimated by $\| \pd_t \theta_{\bs{u}}^{j+1} \|_{a,h} $ with some extra terms by \eqref{eq:thetap-estm2}. Applying the Cauchy--Schwarz inequality to the above, and then applying Young's inequality regarding \eqref{eq:thetap-estm2}, we have 
\begin{multline*}
(\pd_t \theta_{\bs{z}}^{j+1}, \theta_{\bs{z}}^{j+1}) \\
\lesssim \| \pd_t e_{\bs{u}}^{I,j+1} \|_{1,h}^2 + h^2 \| \pd_t \utilde{\bs{\sigma}}^{j+1} \|_1^2
+ ( \pd_t e_{\bs{z}}^{I,j+1}, \theta_{\bs{z}}^{j+1}) + \| \omega_1^{j+1} + \omega_2^{j+1} \|_0^2 .
\end{multline*}
Suppose that $\max_{1\leq i \leq N} \theta_{\bs{z}}^i = \theta_{\bs{z}}^{\bar i}$. Taking the summation of the above inequality from $j=0$ to $j=\bar i-1$,
\begin{multline} \label{eq:theta-interm-ineq1}
\sum_{j=1}^{\bar i} (\pd_t \theta_{\bs{z}}^{j}, \theta_{\bs{z}}^{j}) \\
\lesssim \sum_{j=1}^{\bar i} \left( \| \pd_t e_{\bs{u}}^{I,j} \|_{1,h}^2 + h \| \pd_t \utilde{\bs{\sigma}}^{j} \|_1^2 + ( \pd_t e_{\bs{z}}^{I,j}, \theta_{\bs{z}}^{j}) + \| \omega_1^{j} + \omega_2^{j} \|_0^2  \right) .
\end{multline}
By the arithmetic-geometric mean inequality, 
\begin{align*}
\lap t \sum_{j=1}^{\bar i} (\pd_t \theta_{\bs{z}}^{j}, \theta_{\bs{z}}^{j}) &= \half \| \theta_{\bs{z}}^{\bar i} \|_0^2 - \half \| \theta_{\bs{z}}^0 \|_0^2 + \half \sum_{j=1}^{\bar i} \{ \| \theta_{\bs{z}}^{j-1} \|_0^2 + \| \theta_{\bs{z}}^j \|_0^2 - (\theta_{\bs{z}}^{j-1}, \theta_{\bs{z}}^j ) \}\\
&\geq \half \| \theta_{\bs{z}}^{\bar i} \|_0^2 - \half \| \theta_{\bs{z}}^0 \|_0^2.
\end{align*}
Together with the above inequality \eqref{eq:theta-interm-ineq1} yields
\begin{align*}
&\half \| \theta_{\bs{z}}^{\bar i} \|_0^2 - \half \| \theta_{\bs{z}}^0 \|_0^2 \\
&\lesssim \lap t \sum_{j=1}^{\bar i} \{ \| \pd_t e_{\bs{u}}^{I,j} \|_{1,h}^2 + h^2 \| \pd_t \utilde{\bs{\sigma}}^{j} \|_1^2
+ ( \pd_t e_{\bs{z}}^{I,j}, \theta_{\bs{z}}^{j}) + \| \omega_1^{j} + \omega_2^{j} \|_0^2  \} \\
&\lesssim \lap t \sum_{j=1}^{\bar i} \{ \| \pd_t e_{\bs{u}}^{I,j} \|_{1,h}^2 + h^2 \| \pd_t \utilde{\bs{\sigma}}^{j} \|_1^2
+ \| \pd_t e_{\bs{z}}^{I,j} \|_0 \| \theta_{\bs{z}}^{\bar i} \|_0 + \| \omega_1^{j} + \omega_2^{j} \|_0^2  \},
\end{align*}
where the last inequality is due to the Cauchy--Schwarz inequality and the definition of $\| \theta_{\bs{z}}^{\bar i} \|_0$. This inequality is in a form that Lemma \ref{lemma:basic-ineq} is applicable to 
\begin{align*}
&A = \frac{1}{\sqrt{2}} \| \theta_{\bs{z}}^{\bar i} \|_0, \qquad B=0, \qquad C = c \lap t \sum_{j=1}^{\bar i} \| \pd_t e_{\bs{z}}^{I,j} \|_0, \\
&D = \half \| \theta_{\bs{z}}^0 \|_0^2 + c \lap t \sum_{j=1}^{\bar i} \{ \| \pd_t e_{\bs{u}}^{I,j} \|_{1,h}^2 + h^2 \| \pd_t \utilde{\bs{\sigma}}^{j} \|_1^2 + \| \omega_1^{j} + \omega_2^{j} \|_0^2  \}. 
\end{align*}
By Lemma \ref{lemma:basic-ineq}, we only need to show that $C \lesssim h + \lap t$ and $D \lesssim h^2 + \lap t^2$. 

The estimate of $C$ is easily obtained by
\begin{align*}
C = c \sum_{j=1}^{\bar i} \left \| \int_{t_{j-1}}^{t_j} \dot e_{\bs{z}}^{I}(s) ds \right \|_0 \lesssim h \| \bs{z} \|_{W^{1,1}H^1} \lesssim h \| p \|_{W^{1,1}H^2}.
\end{align*}
By the assumption on initial data there is nothing to prove for the first term of $D$. For estimates of the other terms of $D$ we will prove the following inequalities:
\begin{align}
\label{eq:D-estm1} \lap t \sum_{j=1}^{\bar i} \| \pd_t e_{\bs{u}}^{I,j} \|_{1,h}^2 
&\lesssim h^2 \| \bs{u} \|_{W^{1,2}H^2}^2, \\
\label{eq:D-estm2} \lap t \sum_{j=1}^{\bar i} \| \pd_t \utilde{\bs{\sigma}}^{j} \|_1^2 &\lesssim h^2 \| \utilde{\bs{\sigma}} \|_{W^{1,2}H^1}^2 \lesssim h^2 (\| \bs{u} \|_{W^{1,2}H^2}^2 + \| p \|_{W^{1,2}H^1}^2), \\
\label{eq:D-estm3} \lap t \sum_{j=1}^{\bar i} \| \omega_1^j \|_0^2 &\lesssim \lap t^2 \| p \|_{W^{2,2}L^2}^2 + h^2 \| p \|_{W^{1,2}H^1}^2, \\
\label{eq:D-estm4} \lap t \sum_{j=1}^{\bar i} \| \omega_2^j \|_0^2 &\lesssim \lap t^2 \| \bs{u} \|_{W^{2,2}H^1}^2 + h^2 \| \bs{u} \|_{W^{1,2}H^2}^2 .
\end{align} 
The proofs of \eqref{eq:D-estm1} and \eqref{eq:D-estm2} are similar, so we only show \eqref{eq:D-estm1}. By H\"{o}lder inequality and \eqref{eq:interp-approx} we have
\begin{align*}
\lap t \| \pd_t e_{\bs{u}}^{I,j} \|_{1,h}^2 = \frac{1}{\lap t} \left \| \int_{t_{j-1}}^{t_j} \dot e_{\bs{u}}^{I}(s) ds \right \|_{1,h}^2 &\leq \int_{t_{j-1}}^{t_j} \| \dot e_{\bs{u}}^I(s) \|_{1,h}^2 ds \\
&\lesssim h^2 \int_{t_{j-1}}^{t_j} \| \dot{\bs{u}} (s)\|_2^2 ds,
\end{align*}
and its summation over $1 \leq j \leq \bar i$ gives \eqref{eq:D-estm1}.

Since proofs of \eqref{eq:D-estm3} and \eqref{eq:D-estm4} are similar as well, we only prove \eqref{eq:D-estm3}. For each $1 \leq j \leq \bar i$, 
\begin{align*}
\lap t \| \omega_1^j \|_0^2 &\lesssim \lap t \left \| \frac{1}{\lap t} \int_{t_{j-1}}^{t_j} (\dot p(s) - (\lap t) \dot p(t_j)) ds - \frac{1}{\lap t} \int_{t_{j-1}}^{t_j} \dot e_p^I (s) ds \right \|_0^2 \\ 
&\lesssim \frac{1}{\lap t} \left \| \int_{t_{j-1}}^{t_j} (\dot p(s) - (\lap t) \dot p(t_j)) ds \right \|_0^2 + \frac{1}{\lap t} \left \| \int_{t_{j-1}}^{t_j}\dot e_p^I (s) ds \right \|_0^2 \\
&\lesssim \lap t \left( \int_{t_{j-1}}^{t_j} \| \ddot{p}(s) \|_0 ds \right)^2 + \frac{h^2}{\lap t} \left ( \int_{t_{j-1}}^{t_j} \| \dot p (s) \|_0 ds \right )^2.
\end{align*}
Applying H\"{o}lder's inequality to the last form yields
\begin{align*}
\lap t \| \omega_1^j \|_0^2 &\lesssim \lap t^2 \int_{t_{j-1}}^{t_j} \| \ddot{p}(s) \|_0^2 ds + h^2 \int_{t_{j-1}}^{t_j} \| \dot p (s) \|_1^2 ds ,
\end{align*}
and \eqref{eq:D-estm3} follows by taking the summation of it over $1 \leq j \leq \bar i$. 
\end{proof}

\section{Conclusion} \label{sec:conclusion}
We proposed a new finite element method for Biot's consolidation model and showed the a priori error analysis of the semidiscrete and the fully discrete solutions. Uniform-in-time error bounds of all the unknowns are obtained with the method, so the poroelasticity locking does not occur. 
Moreover, we do not use Gr\"{o}nwall's inequality in our error analysis, so there is no exponentially growing factor in the obtained error bounds. 

We remark that our method can be readily extended to higher order and to rectangular meshes using finite elements having the features (\ref{eq:sigma-property1}--\ref{eq:sigma-property3}). For instance, Guzm\'{a}n and Neilan constructed higher order elements in two and three dimensions for triangular meshes in \cite{MR2991835}. 
Chen, Xie, and Zhang constructed some two and three dimensional rectangular elements in \cite{MR2495068}. 

\bibliography{./../reference/FEM}
\bibliographystyle{amsplain}
\vspace{.125in}

\end{document}